\documentclass[12pt]{article}

\usepackage[utf8]{inputenc}
\usepackage{fullpage}
\usepackage{amsmath,amsthm,amssymb,amsfonts,csquotes,mathtools}
\usepackage{mathrsfs}
\usepackage{graphicx}
\usepackage[colorlinks=true, linkcolor=blue, citecolor=blue, urlcolor=blue, breaklinks=true]{hyperref}
\usepackage{thmtools}
\usepackage[capitalise]{cleveref}
\usepackage{tikz,float}
\usepackage{subcaption}
\usepackage{xcolor}
\usetikzlibrary{decorations.pathreplacing}

\newcommand{\is}{\mathsf{is}}
\newcommand{\ds}{\mathsf{ds}}
\newcommand{\ls}{\mathsf{ls}}
\newcommand{\as}{\mathsf{as}}
\newcommand{\asc}{\operatorname{asc}}
\newcommand{\des}{\operatorname{des}}
\newcommand{\Asc}{\operatorname{Asc}}
\newcommand{\Des}{\operatorname{Des}}
\newcommand{\s}{\mathfrak{S}}
\newcommand{\evac}{\operatorname{evac}}
\newcommand{\dword}{\operatorname{dword}}
\newcommand{\desword}[1]{\mathtt{#1}}

\newtheorem{theorem}{Theorem}[section]

\newtheorem{corollary}[theorem] {Corollary}
\newtheorem{definition}[theorem]{Definition}
\newtheorem{example}[theorem]{Example}
\newtheorem{lemma}[theorem]{Lemma}

\newtheorem{proposition}[theorem]{Proposition}
\newtheorem{remark}[theorem]{Remark}

\theoremstyle{definition}
\newtheorem{algorithm}[theorem]{Algorithm}

\title{\large\textbf{Descent-restricted subsequences via RSK and evacuation}}

\author{Krishna Menon\footnote{Department of Mathematics, KTH Royal Institute of Technology, Sweden. Email: \href{mailto:puzhan@kth.se}{puzhan@kth.se}} \ and Anurag Singh\footnote{Department of Mathematics, Indian Institute of Technology (IIT) Bhilai, India. Email: \href{mailto:anurags@iitbhilai.ac.in}{anurags@iitbhilai.ac.in}}}

\date{}

\begin{document}

\maketitle

\begin{abstract}
    The length $\is(\pi)$ of a longest increasing subsequence in a permutation $\pi$ has been extensively studied. 
    An increasing subsequence is one that has no descents. 
    We study generalizations of this statistic by finding longest subsequences with other descent restrictions. 
    We first consider the statistic which encodes the longest length of a subsequence with a given number of descents. 
    We then generalize this to restrict the descent set of the subsequence. 
    Extending the classical result for $\is(\pi)$, we show how these statistics can be obtained using the RSK correspondence and the Schützenberger involution. 
    In particular, these statistics only depend on the recording tableau of the permutation.
\end{abstract}

\section{Introduction}
The study of longest increasing subsequences (LIS) in permutations is a classical and central topic in combinatorics, with strong connections to representation theory, probability, and theoretical computer science. 
A foundational result of Schensted shows that the length of a longest increasing subsequence of a permutation can be read off from the Robinson--Schensted--Knuth (RSK) correspondence, namely as the length of the first row of the associated Young diagram \cite{schensted}. 
This discovery initiated a vast body of work on variants, distributions, and refinements of LIS (see, for instance, \cite{lisbook,stanley2007increasing}, and the references therein).

A popular generalization is Greene's theorem \cite{greene}, which extends Schensted's result to give interpretations for other row lengths of the Young diagrams obtained using the RSK correspondence. 
This involves the statistic $\is_k(\pi)$, which is the longest length of a subsequence that can be expressed as a union of $k$ increasing subsequences. 
Variants replacing monotonicity with prescribed up-down patterns have also been explored. 
A notable example is longest alternating subsequences, introduced and studied by Stanley \cite{stan_as}. 
There are several other statistics whose motivation comes from LIS \cite{wave,almostinc,common}.

For a permutation $\pi \in \s_n$ and a set $D \subseteq [n - 1]$, we define
\begin{equation*}
        \ls_D(\pi) \coloneqq \max \{\#I \mid \Des(\pi_I) = D\},
\end{equation*}
the length of a longest subsequence of $\pi$ whose descent set is precisely $D$. 
This statistic simultaneously generalizes longest increasing subsequences ($D=\varnothing$), alternating subsequences, and subsequences with a fixed number of descents.

For special cases of these statistics, we describe bounds on them and also characterize and count the permutations that achieve these bounds. 
For the general statistic $\ls_D$, we describe a method (based on a decomposition of $D$ into intervals) to determine when such subsequences exist and how their maximum lengths can be computed. 
This relies on the use of growth diagrams and the Sch\"utzenberger involution \cite{fomin_growth,schutzenberger}, and leads to a complete tableau-theoretic description of these descent-restricted subsequence statistics. 
In particular, for fixed $D$, the value of $\ls_D(\pi)$ depends only on the recording tableau $Q(\pi)$, extending the classical result for longest increasing subsequences.

The paper is organized as follows. 
In \Cref{sec:prelims}, we review the necessary background on the RSK correspondence and Sch\"utzenberger involution. 
\Cref{sec:lsd} studies longest subsequences with a fixed number of descents. 
We first obtain bounds for these statistics and  characterize when they are met. 
We then describe a method to extract these statistics from the evacuation growth diagram. 
In \Cref{sec:lsD}, we extend these results to the statistic $\ls_D$. 
We also characterize when two permutations attain the same value for all these statistics. 
We conclude with several open problems and directions for future work.

\begin{remark}
    The questions posed in \cite[Section 5]{stan_as} were the initial motivation for our study. 
    However, we later found that an algorithm to compute a statistic essentially equivalent to $\ls_D$ (called \emph{longest wave subsequence with trend}) has been defined in \cite{wave}. 
    In particular, \Cref{lsDalgo} described in \Cref{sec:lsD} is similar to the one in \cite{wave}. 
    Since the connections to RSK and the Sch\"utzenberger involution are not explored in \cite{wave}, we present them in this work.
\end{remark}

% \vspace{1cm}
% Longest increasing subsequences of permutations is a popular topic in both mathematics and computer science. 
% One of the significant results in the area is by Schensted \cite{schensted} which states that the length of such a sequence can be obtained using the ubiquitous RSK correspondence. 
% There are several other results as well as open problems related to longest increasing subsequences and we refer the reader to \cite{lisbook} for more information.

% Several variants of longest increasing subsequences have been studied: 

\section{Preliminaries}\label{sec:prelims}

We use $\s_n$ to denote the set of permutations of $[n] \coloneqq \{1, 2, \ldots, n\}$.
Permutations are written in one-line notation. 
For a permutation $\pi \in \s_n$ and a subset $I \subseteq [n]$, we write $\pi_I$
for the subsequence of $\pi$ given by $\pi(i_1)\pi(i_2)\cdots\pi(i_k),$ 
where $i_1 < i_2 < \cdots < i_k$ are the elements of $I$.

We collect below some notation that will be used throughout the paper.
For any permutation $\pi \in \s_n$ (or, more generally, for any sequence of length
$n$), we use the following conventions.
\begin{itemize}
	\item $\Des(\pi) = \{ i \in [n-1] \mid \pi(i) > \pi(i+1) \}$ denotes the descent
	set of $\pi$, and $\des(\pi) = \#\Des(\pi)$ denotes the number of descents.
	We define the ascent set $\Asc(\pi)$ and the number of ascents $\asc(\pi)$
	analogously.
	
	\item $\is(\pi)$ denotes the length of a longest increasing subsequence of $\pi$.
	
	\item For $k \ge 1$, $\is_k(\pi)$ denotes the maximum length of a subsequence of
	$\pi$ that can be written as the union of $k$ increasing subsequences.
\end{itemize}

\begin{example}
	Let $\pi = 234615$. If $I = \{2,4,5\}$, then $\pi_I = 361$.
	Moreover, $\Des(\pi) = \{4\}$ and $\des(\pi) = 1$.
	The length of a longest increasing subsequence of $\pi$ is $\is(\pi) = 4$; for
	instance, the subsequence $2345$ is increasing.
	
	Furthermore, $\is_2(\pi) = 6$, since the entire permutation $\pi$ can be written as
	the union of two increasing subsequences, for example $2346$ and $15$.
\end{example}

\subsection{The RSK correspondence}

We now recall some basic facts about the Robinson--Schensted--Knuth (RSK) correspondence defined in \cite{schensted}. 
The definitions and results we state can be found in \cite{stanley_ec2}.

A \emph{partition} $\lambda$ of $n$, denoted $\lambda \vdash n$, is a weakly decreasing sequence of positive numbers that sums to $n$. 
The \emph{Young diagram} of a partition $\lambda = (\lambda_1, \lambda_2, \ldots)$ consists of rows of boxes where the $i$-th row (from the top) has $\lambda_i$ boxes. 
A \emph{standard Young tableau (SYT)} of shape $\lambda$ is a filling of the boxes in the Young diagram of a partition of $n$ with the numbers from $[n]$ (each used once) such that the rows and columns are increasing.

Given a permutation $\pi \in \s_n$, the RSK correspondence associates to $\pi$ a pair $(P(\pi), Q(\pi))$ of SYTs of the same shape $\lambda \vdash n$. 
The \emph{insertion tableau} $P(\pi)$ is obtained by performing row-insertion on the entries $\pi(1), \pi(2), \ldots, \pi(n)$ in order, while the \emph{recording tableau} $Q(\pi)$ records the positions at which new boxes are added in the insertion process. 
We will not go through the details of this well-known correspondence but only mention the results we will use.

If $(\lambda_1, \lambda_2, \ldots)$ is the shape of $P(\pi)$, which is the same as that of $Q(\pi)$, then $\is_k(\pi) = \lambda_1 + \lambda_2 + \cdots + \lambda_k$. 
If the first row of $Q(\pi)$ is $u_1 < u_2 < \cdots < u_{\is(\pi)}$, then $u_i$ is the leftmost (smallest) index where an increasing subsequence of $\pi$ of length $i$ can end, i.e., $\is(\pi(1)\pi(2) \cdots \pi(u_i)) = i$ and $\is(\pi(1)\pi(2) \cdots \pi(k)) < i$ for all $k < u_i$. 
Note that while performing the RSK algorithm, $u_i$ is the index at which the length of the first row becomes $i$.

For an SYT $Q$, we define $\Des(Q)$ to be the set of entries $i$ in $Q$ such that $i + 1$ is in a strictly lower row than $i$ and set $\des(Q) = \#\Des(Q)$. 
For any permutation $\pi$, we have $\Des(\pi) = \Des(Q(\pi))$.

\begin{example}\label{rskex}
    For the permutation $\pi = 4 3 6 5 1 7 2$, we have the following.
    \begin{center}
        \begin{tikzpicture}[scale = 0.8]
            \draw (0, 0) grid (2, -3);
            \draw (2, -1) grid (3, 0);
            \node at (-1.15, -1) {$P(\pi) =$};
            \begin{scope}[shift = {(0.5, -0.5)}]
                \node at (0, 0) {$1$};
                \node at (1, 0) {$2$};
                \node at (2, 0) {$7$};
                \node at (0, -1) {$3$};
                \node at (1, -1) {$5$};
                \node at (0, -2) {$4$};
                \node at (1, -2) {$6$};
            \end{scope}
            \begin{scope}[shift = {(6.5, 0)}]
                \draw (0, 0) grid (2, -3);
                \draw (2, -1) grid (3, 0);
                \node at (-1.15, -1) {$Q(\pi) =$};
                \begin{scope}[shift = {(0.5, -0.5)}]
                    \node at (0, 0) {$1$};
                    \node at (1, 0) {$3$};
                    \node at (2, 0) {$6$};
                    \node at (0, -1) {$2$};
                    \node at (1, -1) {$4$};
                    \node at (0, -2) {$5$};
                    \node at (1, -2) {$7$};
                \end{scope}
            \end{scope}
        \end{tikzpicture}
    \end{center}
    This gives $\is(\pi) = 3$, $\is_2(\pi) = 5$, and $\is_k(\pi) = 7$ for all $k \geq 3$. 
    The second entry of the first row the recording tableau $Q(\pi)(1, 2) = 3$. 
    This reflects the fact that $\is(43) = 1$ and $\is(436) = 2$. 
    We also have $\Des(Q(\pi)) = \{1, 3, 4, 6\} = \Des(\pi)$.
\end{example}

\subsection{The Schützenberger involution}

An important operation on SYTs that we will use is the Schützenberger involution \cite{schutzenberger}. 
This in a shape-preserving involution on SYTs and the image of an SYT $Q$ under this map is called its \emph{evacuation tableau} and denoted by $\evac(Q)$. 
We use Fomin's \emph{growth diagrams} to define this involution \cite{fomin_growth} (other equivalent definitions can be found in \cite{stanley_ec2}).

For two partitions $\lambda$ and $\mu$, we say that $\lambda$ contains $\mu$ if the Young diagram of $\lambda$ contains that of $\mu$. 
To start, we note that any SYT can be encoded using a chain of partitions. 
These partitions indicate the shapes formed by boxes labeled with numbers having value at most $i$ for each $i \geq 0$. 
For example, the SYT $P(\pi)$ from \Cref{rskex} is encoded by the chain
\begin{equation*}
    \varnothing \subset 1 \subset 2 \subset 21 \subset 211 \subset 221 \subset 222 \subset 322.
\end{equation*}

The evacuation growth diagram associated to an SYT $Q$ of size $n$ is an triangle of partitions $\Lambda_{i, j}$ for $0 \leq i \leq j \leq n$. 
The tableau $Q$ is encoded using $(\Lambda_{0, j})_{j = 0}^n$ and we set $\Lambda_{i, 0} = \varnothing$ for all $i$. 
We want the final diagram to be such that $\Lambda_{i, j}$ is contained in $\Lambda_{i, j + 1}$ and contains $\Lambda_{i + 1, j}$. 
Note that if $\lambda \vdash n$ and $\mu \vdash n + 2$, then there are at most two partitions that contain $\lambda$ and are contained in $\mu$. 
If $\Lambda_{i - 1, j - 1}$, $\Lambda_{i, j - 1}$, and $\Lambda_{i - 1, j}$ are defined, we set $\Lambda_{i, j}$ to be different from $\Lambda_{i - 1, j - 1}$ whenever possible. 
These conditions uniquely define the triangle of partitions. 
We define $\evac(Q)$ to be the SYT that is encoded by the chain of partitions $(\Lambda_{n - i, n})_{i = 0}^n$.

\begin{example}\label{evacex}
    An SYT and its evacuation tableau are shown below.
    \begin{center}
        \begin{tikzpicture}[scale = 0.8]
            \draw (0, 0) grid (3, -1);
            \draw (0, -1) grid (1, -3);
            \node at (-1, -1) {$Q =$};
            \begin{scope}[shift = {(0.5, -0.5)}]
                \node at (0, 0) {$1$};
                \node at (1, 0) {$2$};
                \node at (2, 0) {$4$};
                \node at (0, -1) {$3$};
                \node at (0, -2) {$5$};
            \end{scope}
            \begin{scope}[shift = {(7, 0)}]
                \draw (0, 0) grid (3, -1);
                \draw (0, -1) grid (1, -3);
                \node at (-1.4, -1) {$\evac(Q) =$};
                \begin{scope}[shift = {(0.5, -0.5)}]
                    \node at (0, 0) {$1$};
                    \node at (1, 0) {$3$};
                    \node at (2, 0) {$5$};
                    \node at (0, -1) {$2$};
                    \node at (0, -2) {$4$};
                \end{scope}
            \end{scope}
        \end{tikzpicture}
    \end{center}
    The evacuation growth diagram for $Q$ is shown in \Cref{fig:evac_example}. 
    The diagram is drawn so that if it is rotated $45$ degrees clockwise, then the labeling of the partitions $\Lambda_{i, j}$ match those of a matrix. 
    In this case, we have $\Lambda_{1, 5} = 211$ and $\Lambda_{2, 4} = 2$. 
    The row on the left encodes $Q$ and the column on the right encodes $\evac(Q)$.
\end{example}

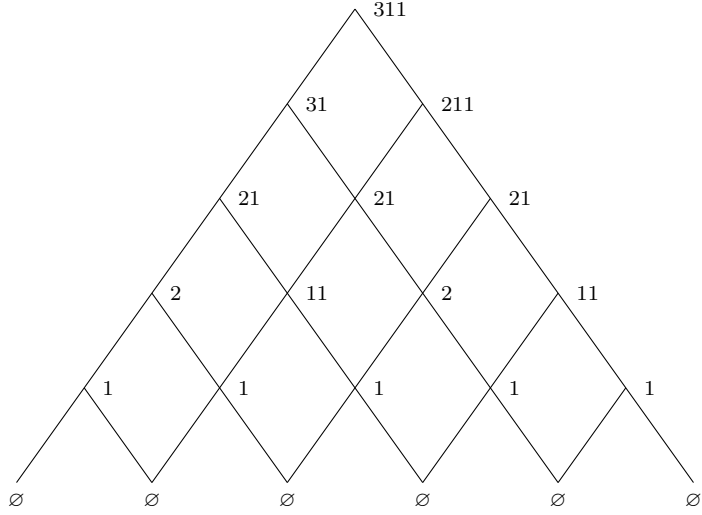
\begin{figure}[ht]
	\centering
	\begin{tikzpicture}[scale = 0.9, yscale = 1.4]
		
		\foreach \x in {0,...,5}
		\node [below] at (2*\x, 0) {\scriptsize $\varnothing$};
		
		\foreach \x in {0,...,4}
		\node [right = 0.1cm] at (2*\x + 1, 1) {\scriptsize $1$};
		
		\node [right = 0.1cm] at (2,2) {\scriptsize $2$};
		\node [right = 0.1cm] at (4,2) {\scriptsize $11$};
		\node [right = 0.1cm] at (6,2) {\scriptsize $2$};
		\node [right = 0.1cm] at (8,2) {\scriptsize $11$};
		
		\node [right = 0.1cm] at (3,3) {\scriptsize $21$};
		\node [right = 0.1cm] at (5,3) {\scriptsize $21$};
		\node [right = 0.1cm] at (7,3) {\scriptsize $21$};
		
		\node [right = 0.1cm] at (4,4) {\scriptsize $31$};
		\node [right = 0.1cm] at (6,4) {\scriptsize $211$};
        
		\node [right = 0.1cm] at (5,5) {\scriptsize $311$};
		\foreach \x in {0,...,4}
		\draw (2*\x, 0) -- (5 + \x, 5 - \x);
		\foreach \x in {0,...,4}
		\draw (2*5 - 2*\x, 0) -- (5 - \x, 5 - \x);		
	\end{tikzpicture}
	\caption{Evacuation growth diagram for $Q$ defined in \Cref{evacex}.}
	\label{fig:evac_example}
\end{figure}

We now gather the facts about this involution on SYTs that we make use of in the sequel. The proof of these facts can be found in \cite{stanley_ec2}. 
For any permutation $\pi \in \s_n$, we have $P(\pi^{rc}) = \evac(P(\pi))$ and $Q(\pi^{rc}) = \evac(Q(\pi))$. 
Here, $\pi^{rc}$ is the \emph{reverse-complement} of $\pi$. 
It is obtained by first reversing $\pi$, then replacing $i$ with $n + 1 - i$ for all $i \in [n]$. 
As a consequence, note that if the first row of $\evac(Q(\pi))$ is $v_1 < v_2 < \cdots < v_{\is(\pi)}$, then $n - v_i + 1$ is the rightmost (largest) index where an increasing subsequence of $\pi$ of length $i$ can start. 
The evacuation growth diagram of $Q(\pi)$ also encodes the shapes obtained when RSK is applied to factors of $\pi$. 
Precisely, $\Lambda_{i - 1, j}$ is the shape of $Q(\pi_{[i, j]})$. 
These facts can be verified for $\pi = 3 4 2 5 1$ whose recording tableau $Q(\pi)$ is the same as the SYT $Q$ from \Cref{evacex}.

\section{Restricting the number of descents}\label{sec:lsd}

We first consider subsequences of a permutation with a prescribed number of descents.

\begin{definition}
	For any permutation $\pi$ and $d \geq 0$,
	\begin{equation*}
		\ls_d(\pi) \coloneqq \max \{\#I \mid \des(\pi_I) = d\}.
	\end{equation*}
\end{definition}

Hence, $\ls_d(\pi)$ is the maximum length of a subsequence of $\pi$ that has exactly $d$ descents. 
When $d = 0$, this reduces to the longest increasing subsequence statistic, so $\ls_0(\pi) = \is(\pi)$.

\begin{example}
	Let $\pi = 1 5 7 3 4 2 6$. Then the values of $\ls_d(\pi)$ are as follows.
	\begin{itemize}
		\item $\ls_0(\pi) = \is(\pi) = 4$. 
		Consider the subsequence $1346$.
		
		\item $\ls_1(\pi) = 6$. Consider $157346$.
		
		\item $\ls_2(\pi) = 7$, since the entire permutation $\pi$ has exactly two descents.
		
		\item $\ls_d(\pi) = 0$ for all $d > 2$, because no subsequence of $\pi$ can have more descents than $\pi$ itself.
	\end{itemize}
\end{example}

The following lemma records a few basic properties of these statistics which will be used repeatedly in the sequel.
% and provide useful intuition about the behavior of $\ls_d(\pi)$.

\begin{lemma}\label{lsdnonzero}
	For any permutation $\pi$, we have $\ls_d(\pi) \geq d + 1$ for all
	$d \leq \des(\pi)$ and $\ls_d(\pi) = 0$ for all $d > \des(\pi)$. Moreover, for any $d_1 < d_2 \leq \des(\pi)$, we have $\ls_{d_1}(\pi) < \ls_{d_2}(\pi)$.
\end{lemma}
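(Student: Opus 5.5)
The three claims follow from elementary arguments about subsequences; no RSK machinery is needed. The key tool is a lemma about deleting one entry from a sequence.

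\textbf{Vanishing above $\des(\pi)$.} We show that $\des(\pi_I) \le \des(\pi)$ for every $I$. Write $I = \{i_1 < \cdots < i_k\}$. If $\pi(i_j) > \pi(i_{j+1})$, then some $t \in [i_j, i_{j+1}-1]$ satisfies $\pi(t) > \pi(t+1)$; otherwise $\pi$ would be increasing on $[i_j, i_{j+1}]$. The intervals $[i_j, i_{j+1}-1]$ are pairwise disjoint, so we obtain an injection from $\Des(\pi_I)$ into $\Des(\pi)$. Hence no subsequence has more than $\des(\pi)$ descents, and $\ls_d(\pi) = 0$ for $d > \des(\pi)$. (Here the maximum over the empty set is read as $0$.)

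\textbf{Deletion lemma.} Let $w$ be any sequence of distinct numbers with $\des(w) = m \ge 1$. We claim that deleting a suitable single entry gives a sequence with exactly $m-1$ descents. Take the last descent position $p$, so $w(p) > w(p+1) < w(p+2) < \cdots$.
\begin{itemize}
\item If $p+1$ is the last position, delete $w(p+1)$. This removes exactly that descent and creates no new one.
\item Otherwise, delete $w(p)$. The pairs that change are $(w(p-1), w(p))$ and $(w(p), w(p+1))$, which are replaced by $(w(p-1), w(p+1))$.
\begin{itemize}
\item If $w(p-1) > w(p)$, then $w(p-1) > w(p+1)$. The two old descents become one, so the count drops by exactly one.
\item If $w(p-1) < w(p)$, the comparison $w(p-1)$ vs.\ $w(p+1)$ may go either way. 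If $w(p-1) > w(p+1)$, the count does not drop. In that case delete $w(p+1)$ instead. The affected pairs are $(w(p), w(p+1))$ and $(w(p+1), w(p+2))$, which become $(w(p), w(p+2))$. The old pair contributed one descent and one ascent; the new pair can be either. This is the fiddly step.
\end{itemize}
\end{itemize}

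A cleaner route avoids this case analysis. Take a maximal decreasing run $w(a) > w(a+1) > \cdots > w(b)$ with $b > a$, and delete an interior or endpoint element of that run chosen so that exactly one descent disappears.
\begin{itemize}
\item If the run has length at least $3$, delete $w(a+1)$. The new adjacent pair $w(a) > w(a+2)$ is still a descent, so exactly one descent is lost.
\item If the run has length $2$, i.e.\ $w(a) > w(a+1)$ with ascents (or ends) on both sides, compare $w(a-1)$ with $w(a+1)$.
\begin{itemize}
\item If $w(a-1) < w(a+1)$, deleting $w(a)$ leaves the increasing pair $w(a-1) < w(a+1) < w(a+2)$, so the count drops by exactly one.
\item Otherwise $w(a-1) > w(a+1)$. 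Since also $w(a+1) < w(a+2)$, we get $w(a) > w(a+1) < w(a+2)$ and $w(a-1) > w(a+1)$. Delete $w(a+1)$. The pair $(w(a), w(a+2))$ replaces one descent and one ascent.
\item If $w(a) < w(a+2)$, the count drops by one; this is handled.
\item If $w(a) > w(a+2)$, the count stays the same. In that case I would switch to a different descent, or argue by choosing the descent with $w(a)$ minimal.
\end{itemize}
\end{itemize}
I expect this case analysis to be the main obstacle. My first attempt would be to choose, among all descents, the one whose top value $w(p)$ is smallest and delete $w(p)$. Then $w(p-1) < w(p)$ unless $(p-1,p)$ is itself a descent; in that case $w(p-1) > w(p) > w(p+1)$ and the merge works. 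In the other case $w(p-1) < w(p)$, and I would aim to show $w(p-1) < w(p+1)$ is forced, or else fall back to deleting $w(p+1)$ using minimality.

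\textbf{Conclusions from the lemma.} Apply the lemma repeatedly, starting from $\pi$ itself. This produces a subsequence with exactly $d$ descents for every $d \le \des(\pi)$, so $\ls_d(\pi) > 0$. Any sequence with $d$ descents has length at least $d+1$, so $\ls_d(\pi) \ge d+1$.

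For strict monotonicity, let $\ls_{d_2}(\pi) = L$ be realized by $\pi_I$ with $d_2$ descents, where $d_2 \ge 1$ since $d_1 < d_2$. Apply the lemma $d_2 - d_1$ times to $\pi_I$. This gives a subsequence of length $L - (d_2 - d_1) < L$ with exactly $d_1$ descents.

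This shows that a shorter sequence with $d_1$ descents exists, but it does not bound $\ls_{d_1}(\pi)$ from above by $L$, which is what the claim requires. So the monotonicity needs the reverse direction. Let $\pi_I$ realize $\ls_{d_1}(\pi)$ with $d_1 < \des(\pi)$. We must \emph{insert} an entry of $\pi$ into $I$ so that the descent count rises by exactly one; iterating then gives $\ls_{d_2}(\pi) > \ls_{d_1}(\pi)$.

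This insertion lemma is the real core of the proof.
\begin{itemize}
\item Since $\des(\pi_I) < \des(\pi)$, the injection from the first step is not surjective. So some descent $t$ of $\pi$ is not the image of a descent of $\pi_I$.
\item Consider the gap of $I$ that contains positions $t$ and $t+1$. Inserting $\pi(t)$ or $\pi(t+1)$ into that gap changes at most two adjacent comparisons.
\item A case check analogous to the deletion case should show that one of the two choices raises the count by exactly one. The alternative is that the count rises by two or stays the same; if so, use maximality of $I$ (no insertion may keep $d_1$ descents) to exclude the bad cases.
\end{itemize}
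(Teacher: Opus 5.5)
Your first step is correct: the injection $\Des(\pi_I)\hookrightarrow\Des(\pi)$ shows $\ls_d(\pi)=0$ for $d>\des(\pi)$. The rest has two genuine gaps.

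\textbf{The deletion lemma is false.} This is why your case analysis never closes. For $w=2413$ we have $\des(w)=1$, but the four sequences obtained by deleting one entry, namely $413$, $213$, $243$, $241$, all still have exactly one descent. This is precisely your unresolved case $w(a-1)>w(a+1)$ and $w(a)>w(a+2)$. Choosing the descent with smallest top value cannot rescue it, since there is only one descent. So ``apply the lemma repeatedly'' does not produce a subsequence with $d$ descents.

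\textbf{The insertion lemma is only asserted.} You correctly identify it as the core of monotonicity, but you write that a case check ``should show'' it. You also misidentify what must be excluded: the count rising by two never happens.

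\textbf{The missing fact.} The paper's proof rests on one observation: inserting a single entry $x$ into a sequence changes the number of descents by $0$ or $+1$. If $x$ goes between neighbours $a<b$, then $a>x>b$ is impossible, so at most one of $(a,x),(x,b)$ is a descent. If $a>b$, then $a<x<b$ is impossible, so at least one of them is a descent. An insertion at either end adds a single pair.

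With this fact both claims are immediate.
\begin{itemize}
\item \emph{Existence.} Add the entries of $\pi$ one at a time, starting from a single entry. The descent count passes through every value from $0$ to $\des(\pi)$. Alternatively, as in the paper, the prefix $\pi(1)\cdots\pi(i_d)\pi(i_d+1)$, where $i_d$ is the $d$-th descent of $\pi$, has exactly $d$ descents.
\item \emph{Monotonicity.} Let $\pi_I$ realize $\ls_d(\pi)$ with $d<\des(\pi)$; then $I\neq[n]$. Adding any index $j\notin I$ cannot keep $d$ descents, by maximality of $\#I$. So $\pi_{I\cup\{j\}}$ has exactly $d+1$ descents, giving $\ls_{d+1}(\pi)\geq \ls_d(\pi)+1$. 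There is no need to locate a particular uncovered descent $t$ of $\pi$.
\end{itemize}
The paper phrases the monotonicity step as follows. Starting from $\pi_I$ and adding the remaining entries, the count climbs from $d_1$ to $\des(\pi)\geq d_2$ in steps of at most one. Hence some strictly larger superset of $I$ has exactly $d_2$ descents.
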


\begin{proof}
	Let $\Des(\pi) = \{i_1 < i_2 < \cdots < i_{\des(\pi)}\}$.
	For any $d \leq \des(\pi)$, consider the subsequence $\pi(1)\pi(2)\cdots\pi(i_d)\pi(i_d+1)$. 
	This subsequence has exactly $d$ descents, since each descent of $\pi$ occurring at a position in $\{i_1,\ldots,i_d\}$ is preserved.
	As $i_d \geq d$, this subsequence has length at least $d+1$, and hence
	$\ls_d(\pi) \geq d+1$.
	
	Any subsequence of $\pi$ can have at most $\des(\pi)$ descents, so
	$\ls_d(\pi) = 0$ for all $d > \des(\pi)$. 
	Finally, the strict inequality $\ls_{d_1}(\pi) < \ls_{d_2}(\pi)$ for
	$d_1 < d_2 \leq \des(\pi)$ follows from the fact that inserting one number into a sequence either does not change the number of descents or increases it by $1$.
\end{proof}

We next establish general bounds on the statistics $\ls_d(\pi)$ in terms of classical subsequence parameters associated with $\pi$.

\begin{lemma}\label{lsdbound}
    For any permutation $\pi$ and $d \leq \des(\pi)$, we have
    \begin{equation*}
        \is(\pi) + d \leq \ls_d(\pi) \leq \operatorname{min}\{\is_{d + 1}(\pi), \asc(\pi) + d + 1\}.
    \end{equation*}
\end{lemma}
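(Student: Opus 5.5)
We prove the three inequalities separately; each reduces to an elementary observation about subsequences.

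\emph{Lower bound $\is(\pi)+d \le \ls_d(\pi)$.} Induct on $d$, starting from $\ls_0(\pi)=\is(\pi)$. It is cleaner to prove a stronger auxiliary claim: if $\sigma = \pi_I$ has $\des(\sigma) = e < \des(\pi)$, then one can add a single index $j \notin I$ so that $\pi_{I\cup\{j\}}$ has exactly $e+1$ descents.

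To find $j$, take a descent position $i$ of $\pi$, so $\pi(i)>\pi(i+1)$.
\begin{itemize}
\item Suppose $i$ and $i+1$ lie in the same gap of $I$, meaning no element of $I$ lies in $[i,i+1]$. Let the neighbours in $\sigma$ be $a$ (the entry of $\sigma$ before this gap) and $b$ (the entry after it). If $a<b$, at least one of $\pi(i),\pi(i+1)$ is either $>b$ or $<a$, since they cannot both lie in $(a,b)$ in decreasing order without creating a descent. We insert the suitable one: inserting $x$ between $a<b$ creates a descent exactly when $x>b$ or $x<a$. If $a>b$, the gap already sits at a descent, and inserting $x$ keeps the count unless $a>x>b$ fails.
\end{itemize}
This local case analysis gets messy, so a cleaner route is the following. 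Choose $I$ attaining $\ls_e$ and compare $\des$ along a chain of insertions from $\pi_I$ up to the full permutation $\pi$. Each insertion changes $\des$ by $0$ or $+1$: inserting one element never decreases descents, as already used in \Cref{lsdnonzero}. Since $\des$ goes from $e$ to $\des(\pi)>e$, some prefix of the chain reaches exactly $e+1$. That prefix is a subsequence containing $I$ with $e+1$ descents and length strictly greater than $|I|$. Hence $\ls_{e+1}\ge \ls_e+1$, which is exactly the strict monotonicity of \Cref{lsdnonzero}. Iterating from $e=0$ gives $\ls_d\ge \is+d$. So the lower bound follows directly from \Cref{lsdnonzero}.

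\emph{Upper bound $\ls_d(\pi)\le \is_{d+1}(\pi)$.} A subsequence with $d$ descents splits at its descents into $d+1$ increasing runs. It is therefore a union of $d+1$ increasing subsequences of $\pi$, so its length is at most $\is_{d+1}(\pi)$.

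\emph{Upper bound $\ls_d(\pi)\le \asc(\pi)+d+1$.} Let $\sigma=\pi_I$ with $\des(\sigma)=d$ and length $m$. Then $\sigma$ has $m-1-d$ ascents, so it suffices to show $\asc(\sigma)\le\asc(\pi)$. Apply the insertion monotonicity to ascents, which is the same argument with the order reversed: inserting one element into a sequence never decreases the number of ascents. Building $\pi$ from $\sigma$ by insertions then gives $\asc(\sigma)\le \asc(\pi)$. Hence $m-1-d\le \asc(\pi)$, that is, $m\le\asc(\pi)+d+1$.

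The only step needing care is the insertion fact: inserting $x$ between neighbours $a,b$ (or at an end) changes $\des$ by $0$ or $+1$, and symmetrically for $\asc$. This is a three-case check.
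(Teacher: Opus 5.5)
Your final argument is correct. For the lower bound and for $\ls_d(\pi)\le\is_{d+1}(\pi)$ it matches the paper. The paper also just invokes the strict chain $\is(\pi)=\ls_0(\pi)<\ls_1(\pi)<\cdots<\ls_d(\pi)$ from \Cref{lsdnonzero}. That chain rests on the same fact that inserting one entry changes $\des$ by $0$ or $+1$, and the paper splits at descents to get the $\is_{d+1}$ bound exactly as you do. For $\ls_d(\pi)\le\asc(\pi)+d+1$ your route differs. The paper cuts $\pi$ into its $\asc(\pi)+1$ maximal decreasing factors and applies pigeonhole: consecutive entries of a subsequence drawn from the same factor form descents, so more than $\asc(\pi)+d+1$ terms forces at least $d+1$ descents. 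You instead show $\asc(\sigma)\le\asc(\pi)$ for every subsequence, using the ascent version of the insertion fact, and then use $\asc(\sigma)+\des(\sigma)=m-1$. Both establish the same inequality $\asc(\sigma)\le\asc(\pi)$. Your version runs the whole lemma off a single insertion fact. The paper's is a direct one-shot count with no induction over insertions.

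You should, however, delete the opening of your lower-bound paragraph, because the ``stronger auxiliary claim'' is false. Take $\pi=1324$ and $I=\{1,4\}$. Then $\des(\pi_I)=0<1=\des(\pi)$, but adding index $2$ gives $134$ and adding index $3$ gives $124$, and neither has a descent. The same example refutes your assertion that when $a<b$ one of $\pi(i),\pi(i+1)$ must be $>b$ or $<a$: here $a=1$, $b=4$, and $3,2$ both lie in $(1,4)$. In the case $a>b$ you also have the condition reversed: inserting $x$ increases $\des$ exactly when $a>x>b$. None of this is needed. Your chain argument only shows that \emph{some} superset of $I$ has exactly $e+1$ descents. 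That gives $\ls_{e+1}(\pi)\ge\ls_e(\pi)+1$, which is all the lemma requires.
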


\begin{proof}
    Any subsequence of $\pi$ with $d$ descents is the union of $d + 1$ increasing subsequences and hence $\ls_d(\pi) \leq \is_{d + 1}(\pi)$. 
    To obtain the other upper bound, break up the permutation $\pi$ into maximal decreasing factors. 
    If $\pi = 741632985$, it breaks up into maximal decreasing factors as $741\,|\,632\,|\,985$. 
    Note that the number of such factors is $\asc(\pi) + 1$. 
    By the pigeon-hole principle, any subsequence with more than $\asc(\pi) + d + 1$ terms has at least $d + 1$ descents. 
    This shows that $\ls_d(\pi) \leq \asc(\pi) + d + 1$.
    
    The lower bound follows from \Cref{lsdnonzero}. 
    Since $d \leq \des(\pi)$, we have $\is(\pi) = \ls_0(\pi) < \ls_1(\pi) < \cdots < \ls_d(\pi)$ which gives us $\is(\pi) + d \leq \ls_d(\pi)$.
    % Starting with a longest increasing subsequence $\sigma$ of $\pi$, and including $d$ terms of $\pi$ that are not in $\sigma$, results in a subsequence with at most $d$ descents (since $d \leq \des(\pi)$, such terms can be found). 
    % Hence, $\ls_d(\pi) \geq \is(\pi) + d$.
\end{proof}

The bounds in \Cref{lsdbound} are not sharp in general. We characterize precisely when the lower bound $\ls_d(\pi) = \is(\pi) + d$ is attained.

\begin{proposition}\label{gs=isplus1}
    For any permutation $\pi$ and $1 \leq d \leq \des(\pi)$, we have $\ls_d(\pi) = \is(\pi) + d$ if and only if $\asc(\pi) = \is(\pi) - 1$.
\end{proposition}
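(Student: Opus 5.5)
First note that $\is(\pi) \leq \asc(\pi) + 1$ always holds. Indeed, cut $\pi$ into its $\asc(\pi)+1$ maximal decreasing factors, as in the proof of \Cref{lsdbound}. An increasing subsequence can use at most one entry from each factor. So the condition $\asc(\pi) = \is(\pi) - 1$ says exactly that this inequality is an equality, and its negation is $\asc(\pi) \geq \is(\pi)$.

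\emph{The easy direction.} Suppose $\asc(\pi) = \is(\pi) - 1$. Then the upper bound of \Cref{lsdbound} gives $\ls_d(\pi) \leq \asc(\pi) + d + 1 = \is(\pi) + d$. The lower bound of \Cref{lsdbound} gives $\ls_d(\pi) \geq \is(\pi) + d$. Hence equality holds for every $d \leq \des(\pi)$.

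\emph{Reduction of the converse to $d = 1$.} Suppose $\asc(\pi) \geq \is(\pi) =: m$. It suffices to show $\ls_1(\pi) \geq m + 2$. Indeed, \Cref{lsdnonzero} gives the strict chain $\ls_1(\pi) < \ls_2(\pi) < \cdots < \ls_d(\pi)$, so $\ls_d(\pi) \geq \ls_1(\pi) + d - 1 \geq m + d + 1$ for every $1 \leq d \leq \des(\pi)$.

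\emph{Building a long subsequence with one descent.} For $0 \leq k \leq n$ set $f(k) = \is(\pi_{[1,k]})$ and $g(k) = \is(\pi_{[k+1,n]})$. Concatenate a longest increasing subsequence of $\pi_{[1,k]}$ with one of $\pi_{[k+1,n]}$. The result has at most one descent. If its length exceeds $m$, it cannot be increasing, so it has exactly one descent. Therefore
\[
\ls_1(\pi) \geq \max_k \bigl(f(k) + g(k)\bigr),
\]
and it remains to find $k$ with $f(k) + g(k) \geq m + 2$. In RSK terms, $f$ jumps exactly at the first-row entries $u_i$ of $Q(\pi)$. Likewise $g$ drops exactly at the positions $n - v_j + 1$ read from the first row of $\evac(Q(\pi))$. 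So the condition becomes $u_i < n - v_j + 1$ for some $i + j = m + 2$.

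\emph{Main obstacle: the contrapositive.} I would argue by contrapositive: assume $h(k) := f(k) + g(k) - m \in \{0, 1\}$ for all $k$, and deduce $\asc(\pi) \leq m - 1$. Each step $k \to k+1$ changes $f$ by $0$ or $+1$ and $g$ by $0$ or $-1$. The goal is to charge every ascent $\pi(i) < \pi(i+1)$ to a distinct jump of $f$ (or of $g$), yielding at most $m - 1$ ascents.

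The idea for the charging is as follows. Suppose an ascent at $i$ occurred strictly between two consecutive jumps of $f$ and also between two consecutive drops of $g$. Then a longest increasing subsequence of the prefix ending before $i+1$ could be extended by $\pi(i+1)$, or the entry $\pi(i)$ could be prepended to one of the suffix. Either would push $h$ to $2$, a contradiction. Making this rigorous requires choosing the prefix and suffix subsequences carefully, for instance via the leftmost-ending and rightmost-starting properties of $u_i$ and $n - v_j + 1$. This is the step I expect to require the most care.

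If this charging becomes messy, I would fall back on induction on $n$. Delete the last maximal decreasing factor of $\pi$ and track how $\is(\pi)$, $\asc(\pi)$ and the best one-descent subsequence change.
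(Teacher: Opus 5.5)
\paragraph{Verdict: genuine gap.} Several pieces of your proposal are correct and match the paper: the easy direction, the bound $\is(\pi)\le\asc(\pi)+1$, the reduction to $d=1$ via the strict chain, and rewriting $\ls_1(\pi)\ge m+2$ as $f(k)+g(k)\ge m+2$ for some $k$. (You prove the bound with decreasing factors rather than the paper's observation that every increasing pair straddles an ascent; both work.) However, the converse for $d=1$ is the only substantive part of the proposition, and it is not proved.

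\paragraph{Why the charging sketch fails.} Suppose the ascent at $i$ lies strictly between jumps of $f$. Then $f(i+1)=f(i)$, which says exactly that no longest increasing subsequence of $\pi_{[1,i]}$ can be extended by $\pi(i+1)$. Likewise, if $g$ does not drop at $i$, then $\pi(i)$ cannot be prepended to any longest increasing subsequence of $\pi_{[i+1,n]}$. So both alternatives you invoke are excluded by your own hypothesis, and nothing forces $h=2$.

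The count also fails. For $r\ge 2$, the positions $u_r-1$ and $n-v_r+1$ are always ascents, by minimality of $u_r$ and maximality of $n-v_r+1$. So even if you showed every ascent has one of these two forms, you would only get $\asc(\pi)\le 2(m-1)$. Reaching $m-1$ requires the two families to coincide whenever $h\le 1$, which is essentially the proposition itself. For $\pi=3412$ they are $\{1\}$ and $\{3\}$, and indeed $\ls_1(\pi)=4=m+2$. Your induction fallback is too vague to evaluate.

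\paragraph{The missing idea.} The paper fixes one longest increasing subsequence $\pi_I$ with $I=\{i_1<\cdots<i_m\}$. It then pigeonholes the ascent positions over the $m+1$ blocks $[1,i_1-1]$, $[i_r,i_{r+1}-1]$ for $1\le r<m$, and $[i_m,n-1]$.
\begin{itemize}
\item Suppose an ascent $j$ lies in an outer block. Maximality of $I$ gives $j\ne i_1-1$ and $j\ne i_m$. Then $\pi_{I\cup\{j,j+1\}}$ has length $m+2$ and exactly one descent, again by maximality.
\item Otherwise, $\asc(\pi)\ge m$ places two ascents in some inner block $[i_r,i_{r+1}-1]$. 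Maximality forces every entry strictly between positions $i_r$ and $i_{r+1}$ to lie either below $\pi(i_r)$ or above $\pi(i_{r+1})$. Two ascents in this block then force one of three configurations: a high entry before a low one, two increasing high entries, or two increasing low entries. Inserting that pair into $\pi_I$ gives a subsequence of length $m+2$ with exactly one descent.
\end{itemize}
In your notation, this directly exhibits $k$ with $f(k)+g(k)\ge m+2$. Without an argument of this kind, or a fully worked charging argument, your proof of the converse is incomplete.
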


\begin{proof}
    If $\asc(\pi) = \is(\pi) - 1$, then \Cref{lsdbound} immediately gives $\ls_d(\pi) = \is(\pi) + d$. 
    Let $\pi \in \s_n$. 
    Note that if there exists $1 \leq i < j \leq n$ such that $\pi(i) < \pi(j)$, then there exists an ascent at some index in $[i, j - 1]$. 
    Using an increasing subsequence of length $\is(\pi)$, this shows that $\asc(\pi) \geq \is(\pi) - 1$.
    
    Hence, by \Cref{lsdbound}, we have to show that if $\asc(\pi) \geq \is(\pi)$, then $\ls_d(\pi) \geq \is(\pi) + d + 1$. 
    We only have to prove this for the case $d = 1$. 
    Since if $\ls_1(\pi) \geq \is(\pi) + 2$, from \Cref{lsdnonzero}, we get $\ls_1(\pi) < \ls_2(\pi) < \cdots < \ls_d(\pi)$, which gives us $\ls_d(\pi) \geq \is(\pi) + d + 1$, as required.

    Set $k = \is(\pi)$ and suppose that $\asc(\pi) \geq k$. 
    We will show that $\ls_1(\pi) \geq k + 2$ by constructing a subsequence of length $k + 2$ with exactly one descent. 
    Suppose that $I = \{i_1 < i_2 < \cdots < i_k\}$ is such that $\pi_I$ is an increasing subsequence of $\pi$.

    If $\pi$ has an ascent $j \in [1, i_1 - 1] \cup [i_k, n]$, then $\pi_{I \cup \{j, j + 1\}}$ is a subsequence of length $k + 2$ with one descent. 
    Note that such a $j$ can never be equal to $i_1 - 1$ or $i_k$ since this would contradict the fact that $\is(\pi) = k$.

    If $\pi$ has no ascent of the form mentioned above, then since $\asc(\pi) \geq k$, there must exist some $m \in [k - 1]$ such that $\pi$ has at least two ascents in $[i_m, i_{m + 1} - 1]$. 
    Note that $\is(\pi) = k$ implies that for any $i \in [i_m, i_{m + 1} - 1]$, $\pi(i) < \pi(i_m)$ or $\pi(i) > \pi(i_{m + 1})$. 
    The fact that there are at least two ascents in $[i_m, i_{m + 1} - 1]$ means that there exist $i, j \in [i_m, i_{m + 1} - 1]$ where $i < j$ such that
    \begin{enumerate}
        \item $\pi(i) > \pi(i_{m + 1})$ and $\pi(j) < \pi(i_m)$, or
        \item $\pi(i_{m + 1}) < \pi(i)< \pi(j)$, or
        \item $\pi(i) < \pi(j) < \pi(i_m)$.
    \end{enumerate}
    In each case, $\pi_{I \cup \{i, j\}}$ is a subsequence of length $k + 2$ with one descent.
\end{proof}

The above proposition, along with \Cref{lsdbound}, gives us the following result.

\begin{corollary}
    For any permutation $\pi$ and $d \leq \des(\pi)$, if $\asc(\pi) = \is(\pi)$, then $\ls_d(\pi) = \is(\pi) + d + 1$.
\end{corollary}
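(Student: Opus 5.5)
Our plan is to squeeze $\ls_d(\pi)$ between matching bounds taken from \Cref{lsdbound} and \Cref{gs=isplus1}. Throughout, assume $\asc(\pi) = \is(\pi)$.

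For the upper bound, \Cref{lsdbound} gives $\ls_d(\pi) \leq \asc(\pi) + d + 1$. Substituting $\asc(\pi) = \is(\pi)$, this becomes $\ls_d(\pi) \leq \is(\pi) + d + 1$.

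For the lower bound, first take $d \geq 1$.
\begin{itemize}
\item The hypothesis $\asc(\pi) = \is(\pi)$ means $\asc(\pi) \neq \is(\pi) - 1$.
\item So by \Cref{gs=isplus1}, $\ls_d(\pi) \neq \is(\pi) + d$.
\item \Cref{lsdbound} also gives $\ls_d(\pi) \geq \is(\pi) + d$, so in fact $\ls_d(\pi) \geq \is(\pi) + d + 1$.
\end{itemize}
Combined with the upper bound, this gives equality for every $1 \leq d \leq \des(\pi)$.

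The main obstacle is the case $d = 0$, which \Cref{gs=isplus1} excludes. Here $\ls_0(\pi) = \is(\pi)$, so the claimed value $\is(\pi) + 1$ cannot hold. The corollary as stated is therefore only valid for $1 \leq d \leq \des(\pi)$, the same range as \Cref{gs=isplus1}. I would state and prove it under that restriction, noting that $d = 0$ must be excluded.

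One more point needs checking: that the range $1 \leq d \leq \des(\pi)$ is nonempty when we use \Cref{gs=isplus1}. If $\des(\pi) = 0$, then $\pi$ is the identity, so $\asc(\pi) = n - 1 = \is(\pi) - 1$, which contradicts the hypothesis. Hence $\des(\pi) \geq 1$, and the argument above applies to each $d$ in the range.
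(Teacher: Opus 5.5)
Your proof is correct and follows the paper's route: the upper bound $\ls_d(\pi) \leq \asc(\pi) + d + 1$ from \Cref{lsdbound}, combined with \Cref{gs=isplus1}, which excludes the lower-bound value $\is(\pi) + d$. Your remark about $d = 0$ is also right: since $\ls_0(\pi) = \is(\pi)$, the literal statement fails there (e.g.\ $\pi = 2413$ has $\asc(\pi) = \is(\pi) = 2$ and $\des(\pi) = 1$, but $\ls_0(\pi) = 2 \neq 3$), so it should be read for $1 \leq d \leq \des(\pi)$, the range inherited from \Cref{gs=isplus1}.
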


However, the converse of the above corollary is not true. 
For example, for $\pi = 5 6 3 4 1 2$, we have $\is(\pi) = 2$, $\ls_1(\pi) = 4$, and $\asc(\pi) = 3$.

We now count the permutations described in \Cref{gs=isplus1}. 
To do this, we will need to recall the definition of a \emph{semistandard Young tableau (SSYT)}. 
An SSYT of shape $\lambda$ is a filling of the Young diagram of $\lambda$ with positive integers such that the rows are weakly increasing and columns are strictly increasing. 
For any partition $\lambda$, we use $f^{\lambda}$ to denote the number of SYTs of shape $\lambda$; and for any $k \geq 1$, we use $s_\lambda(1^k)$ to denote the number of SSYTs of shape $\lambda$ with entries from $[k]$. 
This notation for the number of SSYTs is a reflection of the fact that these numbers can be extract using Schur functions (see \cite[Chapter 7]{stanley_ec2}).

\begin{proposition}
    For any $n \geq 1$, we have
    \begin{equation*}
        \#\{\pi \in \s_n \mid \asc(\pi) = \is(\pi) - 1\} = \sum_{\lambda \vdash n} f^{\lambda}s_{\lambda'}(1^{\lambda_1}).
    \end{equation*}
    This sequence of numbers is given by \cite[\href{https://oeis.org/A268699}{A268699}]{oeis}.
\end{proposition}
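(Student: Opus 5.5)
The plan is to translate the condition through RSK and count pairs of tableaux. By the facts recalled in \Cref{sec:prelims}, $\is(\pi) = \lambda_1$ where $\lambda$ is the common shape of $P(\pi)$ and $Q(\pi)$, and $\Des(\pi) = \Des(Q(\pi))$. So $\asc(\pi) = n-1-\des(Q(\pi))$, and the condition $\asc(\pi) = \is(\pi)-1$ becomes $\des(Q) = n - \lambda_1$. This condition depends only on $Q$. Since RSK is a bijection onto pairs of SYTs of equal shape, the count equals $\sum_{\lambda \vdash n} f^\lambda \cdot \#\{Q \text{ SYT of shape } \lambda \mid \des(Q) = n-\lambda_1\}$. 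It therefore suffices to show that the number of SYTs $Q$ of shape $\lambda$ with $\des(Q) = n - \lambda_1$ is $s_{\lambda'}(1^{\lambda_1})$.

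First I would pass to the transpose. For an SYT $Q$ and $i \in [n-1]$, the entry $i+1$ lies either strictly below and weakly left of $i$, or weakly above and strictly right of $i$. Hence $\Des(Q^t) = [n-1] \setminus \Des(Q)$, where $Q^t$ has shape $\lambda'$. The condition becomes $\des(T) = \lambda_1 - 1 = \ell(\lambda') - 1$ for $T = Q^t$ of shape $\mu = \lambda'$ with $m = \ell(\mu)$ rows. Note that $\des(T) \ge m-1$ always holds, since the first entries of the rows must each be reached by a descent. This also re-derives $\asc(\pi) \ge \is(\pi)-1$ from \Cref{gs=isplus1}.

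Next I would give a bijection between SYTs $T$ of shape $\mu$ with exactly $m-1$ descents and SSYTs of shape $\mu$ with entries in $[m]$. Given such a $T$, the descents cut $[n]$ into $m$ maximal runs with no internal descent. Replacing every entry of the $j$-th run by $j$ gives a filling with entries in $[m]$. Within a run, each $i+1$ is weakly above and strictly right of $i$, so each run occupies a horizontal strip. Therefore the result is an SSYT.

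For the inverse, take an SSYT of shape $\mu$ with entries in $[m]$ and $m = \ell(\mu)$ rows. Its first column is forced to be $1,2,\ldots,m$, so every value occurs. Standardize in the usual way, numbering equal entries from left to right. This yields an SYT whose descents lie only at the boundaries between value blocks. The key check, which I expect to be the only delicate point, is that every such boundary is actually a descent. The first occurrence of $j+1$ after standardization is in column $1$, row $j+1$. The last occurrence of $j$ lies in some row at most $j$, because an entry in row $r$ of an SSYT is at least $r$. Hence the first $j+1$ is in a strictly lower row, so the boundary is a descent and the standardized tableau has exactly $m-1$ descents.

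The two maps are clearly mutually inverse, since standardization and collapsing runs undo each other. This gives the claimed count $s_{\lambda'}(1^{\lambda_1})$, and hence the formula.
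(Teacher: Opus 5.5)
Your proof is correct and uses the paper's bijection, in transposed form. Under $\des(Q)=n-\lambda_1$, the descents of $Q^t$ are exactly $u_2-1,\ldots,u_{\lambda_1}-1$, where $u_1<\cdots<u_{\lambda_1}$ is the first row of $Q$, so collapsing the runs of $Q^t$ is the transpose of the paper's map replacing the entries in $[u_i,u_{i+1}-1]$ by $i$. Your standardization inverse and the check that every block boundary is a descent fill in the bijectivity details the paper leaves to the reader; you omit only the OEIS identification, which the paper itself only sketches via RSK for words.
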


\begin{proof}
    We prove the result by showing that for any $\lambda \vdash n$, the number of SYTs $Q$ of shape $\lambda$ with $\des(Q) + \lambda_1 = n$ is given by $s_{\lambda'}(1^{\lambda_1})$. 
    The result follows by the RSK correspondence.

    Fix a partition $\lambda \vdash n$. 
    By transposing the SSYTs, we can think of $s_{\lambda'}(1^{\lambda_1})$ as counting tableaux of shape $\lambda$ such that
    \begin{itemize}
        \item entries strictly increase along rows,
        \item entries weakly increase along columns, and
        \item all entries are from $[\lambda_1]$.
    \end{itemize}
    We will exhibit a bijection between such tableaux and the SYTs we want to count.
    
    Starting with an SYT $Q$ of shape $\lambda$ such that $\des(Q) + \lambda_1 = n$ we construct such a tableau as follows: 
    Suppose the entries of the first row of $Q$ are $u_1 < u_2 < \cdots < u_{\lambda_1}$. 
    For each $i \in [\lambda_1 - 1]$, replace all entries of $Q$ in $[u_i, u_{i + 1} - 1]$ with $i$. 
    Also replace all entries in $[u_{\lambda_1}, n]$ with $\lambda_1$. 
    An instance of this mapping is
    \vspace{0.2cm}
    \begin{center}
        \begin{tikzpicture}[scale = 0.8]
            \draw (0, 0) grid (2, -3);
            \draw (2, -1) grid (3, 0);
            \begin{scope}[shift = {(0.5, -0.5)}]
                \node at (0, 0) {$1$};
                \node at (1, 0) {$3$};
                \node at (2, 0) {$6$};
                \node at (0, -1) {$2$};
                \node at (1, -1) {$4$};
                \node at (0, -2) {$5$};
                \node at (1, -2) {$7$};
            \end{scope}
            \node at (4, -1.5) {$\rightarrow$};
            \begin{scope}[shift = {(5, 0)}]
                \draw (0, 0) grid (2, -3);
                \draw (2, -1) grid (3, 0);
                \begin{scope}[shift = {(0.5, -0.5)}]
                    \node at (0, 0) {$1$};
                    \node at (1, 0) {$2$};
                    \node at (2, 0) {$3$};
                    \node at (0, -1) {$1$};
                    \node at (1, -1) {$2$};
                    \node at (0, -2) {$2$};
                    \node at (1, -2) {$3$};
                \end{scope}
                \node at (4, -1.5) {.};
            \end{scope}
        \end{tikzpicture}
    \end{center}
    
    Note that $\des(Q) = \#\{i \in [2, n] \mid i - 1\text{ is in a higher row in $Q$ than }i\}$ and $u_1, \ldots, u_{\lambda_1}$ are not in this set. 
    Hence, since $\des(Q) + \lambda_1 = n$, all other entries in $Q$ must be in this set. 
    This fact can be used to show that the map described above is a bijection. 
    The relation to \cite[\href{https://oeis.org/A268699}{A268699}]{oeis} follows using properties of the RSK algorithm applied to words (instead of permutations).
\end{proof}

We now describe a method to compute the statistic $\ls_d$ on permutations. 
For a permutation $\pi$ of size $n$ and $d \leq \des(\pi)$, we have
\begin{equation*}
    \ls_d(\pi) = \operatorname{max}\{\is(\pi_{[1, i_1]}) + \is(\pi_{[i_1 + 1, i_2]}) + \cdots + \is(\pi_{[i_d + 1, n]}) \mid 1 \leq i_1 < i_2 < \cdots < i_d \leq n\}.
\end{equation*}
Note that for any $1 \leq i \leq j \leq n$, the growth diagram for $\evac(Q(\pi_{[i, j]}))$ is a sub-diagram of the one for $\evac(Q(\pi))$ (see \Cref{dyckongrowth}). 
This shows that we can obtain $\ls_d(\pi)$ from the growth diagram of $\evac(Q(\pi))$. 
We vary over all `break-ups' of the diagram into $d + 1$ sub-diagrams as shown in \Cref{dyckongrowth}. 
For each such break-up, we sum the first parts of the partitions at the peaks of the red `hills'. 
The maximum over all these values is $\ls_d(\pi)$. 
Hence $\ls_d(\pi)$ only depends on $Q(\pi)$.

This is a special case of the more general statistic $\ls_D(\pi)$ (see \Cref{lsDalgo}) that can be read off from the growth diagram.

\begin{figure}[ht]
    \centering
    \begin{tikzpicture}[scale = 0.9, yscale = 1.4]
        \foreach \x in {0,...,7}
        \node [below] at (2*\x, 0) {\scriptsize $\varnothing$};
        \foreach \x in {0,...,6}
        \node [right = 0.1cm] at (2*\x + 1, 1) {\scriptsize $1$};
    
        \node [right = 0.1cm] at (2, 2) {\scriptsize $2$};
        \node [right = 0.1cm] at (4, 2) {\scriptsize $11$};
        \node [right = 0.1cm] at (6, 2) {\scriptsize $2$};
        \node [right = 0.1cm] at (8, 2) {\scriptsize $11$};
        \node [right = 0.1cm] at (10, 2) {\scriptsize $2$};
        \node [right = 0.1cm] at (12, 2) {\scriptsize $2$};
    
        \node [right = 0.1cm] at (3, 3) {\scriptsize $21$};
        \node [right = 0.1cm] at (5, 3) {\scriptsize $21$};
        \node [right = 0.1cm] at (7, 3) {\scriptsize $21$};
        \node [right = 0.1cm] at (9, 3) {\scriptsize $21$};
        \node [right = 0.1cm] at (11, 3) {\scriptsize $3$};
    
        \node [right = 0.1cm] at (4, 4) {\scriptsize $31$};
        \node [right = 0.1cm] at (6, 4) {\scriptsize $211$};
        \node [right = 0.1cm] at (8, 4) {\scriptsize $22$};
        \node [right = 0.1cm] at (10, 4) {\scriptsize $31$};
    
        \node [right = 0.1cm] at (5, 5) {\scriptsize $311$};
        \node [right = 0.1cm] at (7, 5) {\scriptsize $221$};
        \node [right = 0.1cm] at (9, 5) {\scriptsize $32$};
    
        \node [right = 0.1cm] at (6, 6) {\scriptsize $321$};
        \node [right = 0.1cm] at (8, 6) {\scriptsize $321$};
    
        \node [right = 0.1cm] at (7, 7) {\scriptsize $331$};
    
        \foreach \x in {0,...,6}
        \draw (2*\x, 0) -- (7 + \x, 7 - \x);
        \foreach \x in {0,...,6}
        \draw (2*7 - 2*\x, 0) -- (7 - \x, 7 - \x);

        \draw [ultra thick, red] (0, 0) -- (2, 2) -- (4, 0) -- (8, 4) -- (12, 0) -- (13, 1) -- (14, 0); 
    \end{tikzpicture}
    \caption{If the diagram is the evacuation growth diagram of $\pi$, then the three red triangles are the evacuation growth diagrams of the factors $\pi_{[1, 2]}$, $\pi_{[3, 6]}$, and $\pi(7)$.}
    \label{dyckongrowth}
\end{figure}

For the case $d = 1$, the observation above states that $\ls_1(\pi)$ can be obtained from the first row of $Q(\pi)$ and that of $\evac(Q(\pi))$.

\begin{proposition}\label{gsfromevac}
    For a non-identity permutation $\pi \in \s_n$, setting $Q = Q(\pi)$, we have
    \begin{equation*}
        \ls_1(\pi) = \operatorname{max}\{ i + j \mid Q(1, i) + \evac(Q)(1, j) \leq n\}.
    \end{equation*}
\end{proposition}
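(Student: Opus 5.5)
Our starting point is the formula stated just before \Cref{dyckongrowth}. For $d = 1 \le \des(\pi)$ (true since $\pi$ is not the identity) it reads
\begin{equation*}
\ls_1(\pi) = \max_{1 \le m < n} \bigl(\is(\pi_{[1,m]}) + \is(\pi_{[m+1,n]})\bigr).
\end{equation*}
We then translate both terms using the facts from \Cref{sec:prelims} about the first rows of $Q = Q(\pi)$ and $\evac(Q)$. Write the first row of $Q$ as $u_1 < \cdots < u_{\is(\pi)}$, so $u_i = Q(1,i)$, and the first row of $\evac(Q)$ as $v_1 < \cdots < v_{\is(\pi)}$, so $v_j = \evac(Q)(1,j)$.

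The key facts are as follows. For every $m$, we have $\is(\pi_{[1,m]}) \ge i$ if and only if $u_i \le m$, since $u_i$ is the smallest index where an increasing subsequence of length $i$ can end. Similarly, $\is(\pi_{[m+1,n]}) \ge j$ if and only if an increasing subsequence of length $j$ starts at an index $\ge m+1$. Since $n - v_j + 1$ is the largest index where such a subsequence can start, this is equivalent to $n - v_j + 1 \ge m+1$, i.e.\ $v_j \le n - m$. Both equivalences rely on the monotonicity that a length-$i$ increasing subsequence contains shorter ones ending no later (respectively starting no earlier). This is immediate, but I will state it.

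First we prove $\le$. Fix $m$ and set $i = \is(\pi_{[1,m]})$ and $j = \is(\pi_{[m+1,n]})$. Both are at least $1$ because the factors are nonempty. The equivalences give $u_i \le m$ and $v_j \le n-m$, so $u_i + v_j \le n$, and the pair $(i,j)$ is admissible on the right-hand side. Next we prove $\ge$. Given $i,j \ge 1$ with $u_i + v_j \le n$, take $m = u_i$. Then $m \le n - v_j \le n-1$, so $1 \le m < n$. Moreover $\is(\pi_{[1,m]}) \ge i$, and since $v_j \le n - m$ we get $\is(\pi_{[m+1,n]}) \ge j$. Hence $\ls_1(\pi) \ge i+j$. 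In the statement, the indices $i,j$ range over $1,\dots,\is(\pi)$, i.e.\ over existing entries of the first rows.

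The step needing most care is the translation through evacuation: $Q(\pi^{rc}) = \evac(Q(\pi))$, and increasing subsequences of $\pi^{rc}$ ending at position $p$ correspond to increasing subsequences of $\pi$ starting at position $n+1-p$. This gives the ``rightmost start'' characterization already recorded in the preliminaries, which I will invoke directly. A remaining subtlety is to ensure the max is over a nonempty set. This holds because $\pi$ is not the identity, so $\des(\pi) \ge 1$ and \Cref{lsdnonzero} gives $\ls_1(\pi) \ge 2$. Indeed, $i = j = 1$ is always admissible: $u_1 = v_1 = 1$ and $n \ge 2$.
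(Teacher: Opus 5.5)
Your proof is correct and is essentially the paper's argument. Both read $Q(1,i)$ as the earliest end of a length-$i$ increasing subsequence and $n-\evac(Q)(1,j)+1$ as the latest start of a length-$j$ one, split a one-descent subsequence at its descent for one inequality, and concatenate two increasing runs for the other. The only difference is that you factor this through the block-decomposition formula for $\ls_d(\pi)$ stated earlier in \Cref{sec:lsd}, which the paper asserts without proof. Its ``$\geq$'' half is exactly where the paper invokes \Cref{lsdnonzero} together with $\des(\pi)\geq 1$, since the concatenated subsequence may have no descent. It is worth adding that one line rather than relying on the unproved display.
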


\begin{proof}
    Suppose $\pi$ has a subsequence of length $i + j$, which has exactly one descent, which is at $i$. 
    Note that the first $i$ terms of this subsequence forms an increasing subsequence of length $i$ and similarly for the last $j$ terms. 
    Using the interpretation of the first row of the recording tableau of a permutation, we get $Q(1, i) < n - \evac(Q)(1, j) + 1$.

    Similarly, if $Q(1, i) < n - \evac(Q)(1, j) + 1$, then we can construct a subsequence of size $i + j$ by combining an increasing subsequence of length $i$ that ends at $Q(1, i)$ and one of length $j$ starting at $n - \evac(Q)(1, j) + 1$. 
    Since $\des(\pi) \geq 1$ and this subsequence has at most one descent, \Cref{lsdnonzero} gives $\ls_1(\pi) \geq i + j$.
\end{proof}

\begin{example}
    With $\pi = 5 3 1 6 2 7 4$, the first row of $Q(\pi)$ and $\evac(Q(\pi))$ are $1, 4, 6$ and $1, 3, 5$ respectively. 
    Hence, we have $\ls_1(\pi) = \max\{1 + 1, 1+ 2, 1 + 3, 2 + 1, 2 + 2, 3 + 1\} = 4$.
\end{example}

\section{Restricting the descent set}\label{sec:lsD}

We now consider the finer statistic where we restrict the descent set rather than just the number of descents.

\begin{definition}
    For a permutation $\pi$ and set of numbers $D$,
    \begin{equation*}
        \ls_D(\pi) \coloneqq \max \{\#I \mid \Des(\pi_I) = D\}.
    \end{equation*}
\end{definition}

Hence, $\ls_D(\pi)$ is the length of a longest subsequence of $\pi$ whose descent set is $D$.

\begin{example}
    For $\pi = 4 2 7 8 3 5 6 1$, we have the following.
    \begin{itemize}
        \item $\ls_{\{3\}}(\pi) = 6$. Consider the subsequence $4 7 8 3 5 6$.

        % \item $\ls_{\{2, 4\}}(\pi) = 5$. 
        % Consider the subsequence $2 8 3 6 1$.

        \item $\ls_{\{1, 2, 3\}}(\pi) = 0$ since there is no subsequence with descent set $\{1, 2, 3\}$.

        \item $\ls_{\{1, 3, 4\}}(\pi) = 5$. 
        Consider $4 2 8 3 1$.
    \end{itemize}
\end{example}

Note that the statistics on permutations we have seen so far can be obtained from this one. 
For example, $\is(\pi) = \ls_{\varnothing}(\pi)$ and $\ls_d(\pi) = \operatorname{max}\{\ls_D(\pi) \mid \#D = d\}$.

The statistics defined in \cite{stan_as} can also be obtained from the statistics mentioned above. 
For a permutations $\pi$, set $\as(\pi)$ to be the length of a longest \emph{alternating} subsequence of $\pi$. 
We say that a sequence $\sigma$ of length $n$ is alternating if $\Des(\sigma) = \{1, 3, 5, \ldots\} \cap [n - 1]$. 
Hence, $\as(\pi) = \max \{\#I \mid \pi_I\text{ is alternating}\}.$

To obtain $\as(\pi)$, find the largest $k$ such that $\ls_D(\pi) \neq 0$ where $D = \{1, 3, 5, \ldots, 2k - 1\}$. 
If $\ls_D(\pi) = 2k$, then $\as(\pi) = 2k$. 
Otherwise, $\as(\pi) = 2k + 1$.

Another class of statistics defined in \cite[Section 5]{stan_as} is via \emph{descent words}.

\begin{definition}\label{desword}
    For any sequence $\pi$ of length $n$, its descent word $\dword(\pi)$ is given by $w_1w_2 \cdots w_{n - 1}$ where $w_i = \desword{D}$ if $i \in \Des(\pi)$ and otherwise $w_i = \desword{U}$.
\end{definition}

For example, $\dword(534612) = \desword{DUUDU}$. 
Starting with any finite word $w$ in the alphabet $\{\desword{U}, \desword{D}\}$, set $\operatorname{len}_w(\pi)$ to be the longest length of a subsequence of $\pi$ whose descent word is a prefix of $w^\infty =www\cdots$, the infinite word formed by concatenating copies of $w$. 
Note that $\is(\pi) = \operatorname{len}_{\desword{U}}(\pi)$ and $\as(\pi) = \operatorname{len}_{\desword{DU}}(\pi)$.

\begin{example}
    For $\pi = 31452867$, we have $\operatorname{len}_{\desword{UUD}}(\pi) = 6$. 
    Note that $(\desword{UUD})^\infty = \desword{UUDUUDUUD}\cdots$ and $\pi$ has the subsequence $\sigma = 345267$ with $\dword(\sigma) = \desword{UUDUU}$. 
    Also, $\pi$ has no subsequence with descent word $\desword{UUDUUD}$.
\end{example}

Determining whether $\pi$ has a subsequence with a given descent word can be done using statistics of the form $\ls_D$. 
For example, $\pi$ has a subsequence with descent word $\desword{DUDUU}$ if and only if $\ls_{\{1, 3\}}(\pi) \geq 6$. 
This can be used to show that for any word $w$ in the alphabet $\{\desword{U}, \desword{D}\}$, we can obtain $\operatorname{len}_w$ from statistics of the form $\ls_D$.

\subsection{Exactly one descent}

Before moving to the general case, we first consider the simplest case of when $\#D = 1$. 
We first determine when these statistics attain a non-zero value.

\begin{proposition}\label{gsinonzero}
    Let $\pi$ be a non-identity permutation of size $n$ with last descent at $k$. 
    If $j = \is(\pi_{[k]})$, then $\ls_{\{i\}}(\pi) \geq i + n - k$ for all $i \leq j$ and $\ls_{\{i\}}(\pi) = 0$ for all $i > j$.
\end{proposition}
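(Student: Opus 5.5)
Since $k$ is the last descent of $\pi$, we have $\pi(k) > \pi(k+1)$ and the suffix $\pi(k+1)\pi(k+2)\cdots\pi(n)$ is increasing. This suffix has length $n-k$, and every descent of $\pi$ lies in $[k]$.

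\emph{Vanishing part.} Suppose $\Des(\pi_I) = \{i\}$ with $I = \{a_1 < \cdots < a_m\}$ and $m \geq i+1$. Then $a_i < a_{i+1}$ and $\pi(a_i) > \pi(a_{i+1})$, so some descent of $\pi$ lies in $[a_i, a_{i+1}-1]$. Since every descent of $\pi$ is at most $k$, this gives $a_i \leq k$. The entries $\pi(a_1) < \cdots < \pi(a_i)$ therefore form an increasing subsequence of $\pi_{[k]}$ of length $i$, so $i \leq \is(\pi_{[k]}) = j$. Hence $\ls_{\{i\}}(\pi) = 0$ whenever $i > j$.

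\emph{Lower bound.} Fix $1 \leq i \leq j$. Take an increasing subsequence of $\pi_{[k]}$ of length $j$. The key step is to arrange that it ends exactly at position $k$; we then keep its last $i$ entries and append the whole suffix $\pi(k+1)\cdots\pi(n)$.

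To arrange the endpoint, take an increasing subsequence of $\pi_{[k]}$ of length $j$ ending at some position $b \leq k$, and consider two cases.

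\begin{itemize}
\item If $\pi(b) \leq \pi(k)$, replace the last entry by $\pi(k)$ when $b < k$. Since $\pi(k) \geq \pi(b)$ exceeds the previous entry, the result is still increasing and now ends at $k$.
\item If $\pi(b) > \pi(k)$, then $b < k$. Appending $\pi(k)$ would create a descent, so we do not. Since $\pi(k) > \pi(k+1)$, we have $\pi(b) > \pi(k+1)$, and we can use $b$ directly as the last increasing position.
\end{itemize}

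So it suffices to take an increasing subsequence of length $i$ inside $\pi_{[k]}$ whose last entry exceeds $\pi(k+1)$. Call its last position $b$.

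\emph{Constructing the witness.} Let the witness be this increasing subsequence of length $i$ ending at $b$, followed by $\pi(k+1), \ldots, \pi(n)$. The first block is increasing and the second block is increasing. The junction is a descent because the last entry of the first block exceeds $\pi(k+1)$. So the descent set is exactly $\{i\}$, and the length is $i + n - k$.

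\emph{Main obstacle: existence of the first block.} We need, for every $i \leq j$, an increasing subsequence of $\pi_{[k]}$ of length $i$ ending at an entry greater than $\pi(k+1)$. Here is the argument.

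\begin{itemize}
\item Take an increasing subsequence $x_1 < \cdots < x_j$ of $\pi_{[k]}$.
\item If $x_i > \pi(k+1)$, use $x_1, \ldots, x_i$.
\item Otherwise $x_1 < \cdots < x_i \leq \pi(k+1)$. Every $x_t$ lies in $\pi_{[k]}$ while $\pi(k+1)$ is at position $k+1$, so $x_t \neq \pi(k+1)$; hence $x_1 < \cdots < x_i < \pi(k+1)$.
\item Let $t$ be the position of $x_i$, so $t \leq k$. If $t < k$, replace the final entry $x_i$ by $\pi(k)$. If $t = k$, then $x_i = \pi(k)$ already.
\item Both cases need $\pi(k) > x_{i-1}$. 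This holds because $\pi(k) > \pi(k+1) > x_i > x_{i-1}$ when $t < k$, and because $x_i > x_{i-1}$ when $t = k$.
\end{itemize}

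In the case $t = k$, the condition $x_i \leq \pi(k+1)$ forces $\pi(k) \leq \pi(k+1)$. This contradicts $\pi(k) > \pi(k+1)$, so this case cannot occur.

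The modified sequence $x_1, \ldots, x_{i-1}, \pi(k)$ is increasing of length $i$ and ends at $\pi(k) > \pi(k+1)$. In every case we obtain the required block, which gives $\ls_{\{i\}}(\pi) \geq i + n - k$ for all $i \leq j$.
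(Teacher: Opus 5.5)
Your proof is correct and follows essentially the paper's argument. The vanishing part uses the same observation that the $i$-th entry of the subsequence must sit at or before the last descent $k$. The lower bound uses the same trick: take an increasing subsequence of length $i$ in $\pi_{[k]}$, replace its last entry by $\pi(k)$ when that entry is too small, and append the increasing suffix $\pi(k+1)\cdots\pi(n)$. The only difference is the threshold, since the paper compares the last entry with $\pi(k)$ and you compare it with $\pi(k+1)$.

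Your opening paragraph about forcing a length-$j$ subsequence to end exactly at position $k$ is redundant. It also misdescribes your second case, which does not end at $k$. You can drop it and keep only the final construction.
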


\begin{proof}
    Let $i \leq j$. 
    Choose an increasing subsequence of $\pi(1)\pi(2) \cdots \pi(k)$ that is of length $i$ (which is possible by the definition of $j$). 
    If the last term of this subsequence is $\pi(m)$ and $\pi(m) > \pi(k)$, then appending $\pi(k + 1)\pi(k + 2) \cdots \pi(n)$ to the end of this subsequence gives a subsequence of length $i + n - k$ with descent set $\{i\}$. 
    If $\pi(m) \leq \pi(k)$, then deleting $\pi(m)$ and appending $\pi(k)\pi(k + 1) \cdots \pi(n)$ gives us a subsequence of the required form. 
    Hence, we have $\ls_{\{i\}}(\pi) \geq i + n - k$.

    If $i > j$, then any increasing subsequence of length $i$ (if one exists) must have last entry $\pi(m)$ for some $m > k$. 
    But no term after $\pi(m)$ is smaller than it (by the definition of $k$). 
    Hence, there cannot exist a subsequence with descent set $\{i\}$.
\end{proof}

Just as we were able to extract $\ls_1(\pi)$ from the first row of $Q(\pi)$ and its evacuation tableau (see \Cref{gsfromevac}), we can also do so for $\ls_{\{i\}}(\pi)$.

\begin{proposition}\label{gsievac}
    Let $\pi$ be a permutation of size $n$ and set $Q = Q(\pi)$ and $Q_e = \evac(Q)$. 
    For any $i \geq 1$, we have
    \begin{equation*}
        \ls_{\{i\}}(\pi) = \operatorname{max}\{i + j \mid Q(1, i) + Q_e(1, j) \leq n\text{ and }Q_e(1, j + 1) \neq Q_e(1, j) + 1\}
    \end{equation*}
    where we set the maximum of the empty set to be $0$.
\end{proposition}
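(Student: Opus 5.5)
The plan is to first restate the condition $Q_e(1,j+1) \neq Q_e(1,j)+1$ in terms of $\pi$, and then prove the two inequalities separately.

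\textbf{Notation.} Write $a = Q(1,i)$, the leftmost position where an increasing subsequence of length $i$ can end. Write $s_j = n - Q_e(1,j) + 1$, the rightmost position where an increasing subsequence of length $j$ can start; these interpretations were recorded in \Cref{sec:prelims}. Since $Q_e(1,j)$ is increasing in $j$, the $s_j$ are strictly decreasing. The condition $Q(1,i)+Q_e(1,j)\le n$ is exactly $a < s_j$.

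\textbf{Reformulating the second condition.} I claim that, when $s_j \geq 2$, $Q_e(1,j+1)\neq Q_e(1,j)+1$ is equivalent to $\pi(s_j-1) > \pi(s_j)$. Suppose $\pi(s_j-1)<\pi(s_j)$. Prepending $\pi(s_j-1)$ to an increasing subsequence of length $j$ starting at $s_j$ shows that $s_j-1$ starts one of length $j+1$. Since $s_{j+1}<s_j$, this forces $s_{j+1}=s_j-1$. Conversely, suppose $s_{j+1}=s_j-1$. Take an increasing subsequence of length $j+1$ starting at $s_j-1$. Its second term starts an increasing subsequence of length $j$, so it lies at a position at most $s_j$, hence exactly at $s_j$. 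This gives $\pi(s_j-1)<\pi(s_j)$. The boundary case $s_j=1$ cannot occur together with $a<s_j$, so it can be ignored.

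\textbf{Lower bound ($\geq$).} Suppose $a<s_j$ and $\pi(s_j-1)>\pi(s_j)$. Take an increasing subsequence $A$ of length $i$ ending at position $a \le s_j-1$, and an increasing subsequence $B$ of length $j$ starting at $s_j$. If the last term $\pi(a)$ of $A$ exceeds $\pi(s_j)$, then $AB$ has descent set exactly $\{i\}$. Otherwise $\pi(a)<\pi(s_j)<\pi(s_j-1)$, so $a<s_j-1$. Replace the last term of $A$ by $\pi(s_j-1)$; it still exceeds the previous term of $A$, because that term is below $\pi(a)$. This gives an increasing sequence of length $i$ ending at $s_j-1$, followed by a descent into $B$, as in the proof of \Cref{gsinonzero}. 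Either way $\ls_{\{i\}}(\pi)\ge i+j$.

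\textbf{Upper bound ($\leq$).} Suppose $\pi$ has a subsequence of length $i+j$ with descent set $\{i\}$. Its first $i$ terms form an increasing subsequence $A$ ending at some position $p$, and its last $j$ terms form an increasing subsequence $B$ starting at some $q>p$, with $\pi(p)>\pi(q)$. Then $a\le p<q\le s_j$. Let $j'$ be the largest index with $s_{j'}>p$; then $j'\ge j$, and $j'$ satisfies the first condition because $a \le p < s_{j'}$. It remains to check the second condition at $j'$, and this is the step I expect to need the most care. Suppose it fails, so that $s_{j'+1}=s_{j'}-1$. Maximality of $j'$ gives $s_{j'+1}\le p$, so $s_{j'}=p+1$. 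Since $p<q\le s_j\le s_{j'}=p+1$, we get $q=p+1=s_{j'}$. The reformulation above then gives $\pi(p)=\pi(s_{j'}-1)<\pi(s_{j'})=\pi(q)$, contradicting $\pi(p) > \pi(q)$. Hence $i+j'\ge i+j$ lies in the set, and the two inequalities together prove the formula. When no such subsequence exists, the lower-bound argument shows that the set is empty, so the maximum is $0$ by convention, matching $\ls_{\{i\}}(\pi) = 0$.
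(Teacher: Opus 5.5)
There is a genuine gap in your upper bound. Your reformulation of the second condition ($Q_e(1,j+1)\neq Q_e(1,j)+1$ iff $\pi(s_j-1)>\pi(s_j)$ when $s_j\ge 2$) is correct, and so is your lower-bound construction; both match the paper.

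\textbf{The failing step.} The problem is the chain $p<q\le s_j\le s_{j'}=p+1$. The $s_l$ are strictly \emph{decreasing}, so $j\le j'$ gives $s_j\ge s_{j'}$, not $s_j\le s_{j'}$. You therefore cannot conclude $q=p+1$ unless $j=j'$.

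This is not a cosmetic slip: the index $j'$ you chose (the largest with $s_{j'}>p$) can genuinely violate the second condition. Take $\pi=2341$, $i=1$, and the subsequence $21$, so $p=1$, $q=4$, $j=1$. The first row of $Q_e$ is $1,3,4$, so $(s_1,s_2,s_3)=(4,2,1)$ and $j'=2$. But $s_3=s_2-1$, i.e.\ $Q_e(1,3)=Q_e(1,2)+1$. Indeed $i+j'=3>\ls_{\{1\}}(\pi)=2$. The formula holds in this example only through $j=1$, not through your $j'$.

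\textbf{The missing idea.} You must use the whole run of consecutive failures, not just the one at $j'$.
\begin{enumerate}
\item Let $j^*\ge j$ be the smallest index at which the second condition holds. It exists, since the condition is automatic at the last index $\is(\pi)$ of the first row.
\item Then $s_j,s_{j+1},\dots,s_{j^*}$ are consecutive integers. Applying your reformulation at each step shows that $\pi$ is increasing on the factor occupying positions $s_{j^*},\dots,s_j$.
\item If $a\ge s_{j^*}$, then $s_{j^*}\le a\le p<q\le s_j$ puts both $p$ and $q$ inside this increasing factor. This contradicts $\pi(p)>\pi(q)$.
\item Hence $a<s_{j^*}$, so $i+j^*\ge i+j$ lies in the set.
\end{enumerate}
This is the paper's argument: the consecutive values $v_j,v_{j-1},\dots,v_k$ index an increasing factor of $\pi$ that would have to contain the descent, which is impossible.
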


\begin{proof}
    For any $i, j \geq 1$, we use $u_i$ to denote $Q(1, i)$ and $v_j$ to denote $n - Q_e(1, j) + 1$. 
    We say that a pair $(i, j)$ is a \emph{good pair} if $u_i < v_j$ and $v_{j + 1} \neq v_j - 1$.
    Note that for any $j \geq 1$, $v_{j + 1} \neq v_j - 1$ is equivalent to $v_j - 1 \in \Des(\pi)$. 
    Hence, the proposition asserts that for any $i \geq 1$, $$\ls_{\{i\}}(\pi) = \operatorname{max}\{i + j \mid (i, j)\text{ is a good pair}\}.$$
    
    Let $(i, j)$ be a good pair. 
    We will construct a subsequence with descent set $\{i\}$ of length $i + j$. 
    Consider an increasing subsequence of length $i$ ending at $u_i$ and combine it with an increasing subsequence of length $j$ starting at $v_j$. 
    If $\pi(u_i) > \pi(v_j)$, we get a subsequence satisfying our requirements. 
    If $\pi(u_i) < \pi(v_j)$, replace $\pi(u_i)$ in this subsequence with $\pi(v_j - 1)$. 
    This will give us a subsequence that satisfies our requirements.

    Next suppose that we have a subsequence of size $i + k$ with descent set $\{i\}$. 
    We will show that there exists a good pair $(i, j)$ where $j \geq k$. 
    This will prove the proposition.
    
    Suppose that there is no such good pair. 
    Since we already have $u_i < v_k$, this means that $u_i = v_j$ for some $j > k$ and $v_j, v_{j - 1}, \ldots, v_{k + 1}, v_k$ forms a sequence of consecutive numbers. 
    Hence, we have that
    \begin{itemize}
        \item $\pi(v_j) \pi(v_{j - 1}) \cdots \pi(v_{k + 1}) \pi(v_k)$ is an increasing factor of $\pi$,
        \item any increasing subsequence of length $i$ in $\pi$ must end at an index weakly to the right of $u_i = v_j$, and
        \item any increasing subsequence of length $k$ in $\pi$ must begin at an index weakly to the left of $v_k$.
    \end{itemize}
    This tells us that it is impossible for $\pi$ to have a subsequence of length $i + k$ with descent set $\{i\}$, which is a contradiction.
\end{proof}

\begin{example}
    With $\pi = 1 4 5 6 7 2 8 3$, the first row of $Q = Q(\pi)$ is $1, 2, 3, 4, 5, 7$ and the first row of $Q_e = \evac(Q(\pi))$ is $1, 3, 5, 6, 7, 8$. 
    Using the previous proposition, we get $\ls_{\{3\}}(\pi) = 5$. 
    This is because the maximum $j$ such that $Q(1, 3) + Q_e(1, j) \leq 8$ and $Q_e(1, j) \neq Q_e(1, j + 1)$ is $j = 2$.
\end{example}

Using the interpretation of the entries of the first rows of $Q(\pi)$ and $\evac(Q(\pi))$, we get the following equivalent formulation of \Cref{gsievac}.

\begin{proposition}
    Let $\pi$ be a permutation of size $n$ and $i \geq 1$ be such that $\ls_{\{i\}}(\pi) \neq 0$. 
    Suppose $k$ is the smallest number such that $k \in \Des(\pi)$ and $\is(\pi_{[k]}) \geq i$. 
    Then
    \begin{equation*}
        \ls_{\{i\}}(\pi) = i + \is(\pi_{[k + 1, n]}).
    \end{equation*}
\end{proposition}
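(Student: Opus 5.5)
We argue directly; the statement can also be read off from \Cref{gsievac} via the interpretation of the first rows of $Q$ and $\evac(Q)$. For the lower bound, we construct a subsequence. Since $\is(\pi_{[k]}) \geq i$, choose an increasing subsequence of length $i$ inside $\pi_{[k]}$, and choose an increasing subsequence of $\pi_{[k+1,n]}$ of length $\is(\pi_{[k+1,n]})$. If the last entry $\pi(m)$ of the first piece (with $m \leq k$) exceeds the first entry of the second piece, their concatenation has descent set exactly $\{i\}$. Otherwise $\pi(m) < $ first entry of the second piece. Then exactly as in the proof of \Cref{gsinonzero}, we handle the case $m<k$ by replacing $\pi(m)$ with $\pi(k)$. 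This keeps the first piece increasing only if $\pi(k)$ exceeds the previous element; to avoid that issue we use the leftmost-ending idea. Take the first piece to be an increasing subsequence of length $i$ ending at $u_i = Q(1,i) \leq k$, and note that we can arrange $\pi(k) > \pi(k+1)$.

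More cleanly, we would do the following. If $u_i = k$, we are done, since $\pi(k) > \pi(k+1) \geq$ anything we need only when the second piece starts at $k+1$. In general we use the trick from \Cref{gsievac}: if the concatenation has no descent, replace the last element of the first piece by $\pi(k)$. If $u_i<k$, the first $i-1$ terms of an increasing subsequence ending at $u_i$ end before $u_i<k$, but we still need them smaller than $\pi(k)$. So instead we pick $k' \in [u_i,k]$ appropriately. This is the delicate point.

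For the upper bound, take a subsequence $\sigma$ with descent set $\{i\}$ whose descent is between positions $a<b$ of $\pi$, with $\pi(a)>\pi(b)$, and whose first $i$ terms are increasing and end at $a$. Then $\is(\pi_{[a]}) \geq i$. Some descent of $\pi$ lies in $[a,b-1]$, say at $k''$, and $\is(\pi_{[k'']}) \geq \is(\pi_{[a]}) \geq i$, so by minimality $k \leq k''$. Hence $b \geq k''+1 \geq k+1$, and the last part of $\sigma$, which starts at $b$, is an increasing subsequence of $\pi_{[k+1,n]}$. This gives $\ls_{\{i\}}(\pi) \leq i + \is(\pi_{[k+1,n]})$.

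For the lower bound done properly, we take a longest increasing subsequence $\tau$ of $\pi_{[k+1,n]}$ and an increasing subsequence $\rho$ of length $i$ in $\pi_{[k]}$ ending at some $m \leq k$. Choose $\rho$ so that $m$ is maximal. The main obstacle is ensuring a descent at the junction. If $\pi(m) > \tau_1$ we are done. If not, consider $k$ itself. Suppose $\pi(k) > \tau_1$, and try to build a length-$i$ increasing subsequence ending at $k$ or at the last descent position where it stays large. Minimality of $k$ says every descent $d<k$ has $\is(\pi_{[d]})<i$, so $u_i$ lies in the increasing run ending at $k$. Thus $\pi_{[u_i,k]}$ is increasing, and an increasing subsequence of length $i$ ending at $u_i$ extends to one ending at $k$. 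Then $\pi(k)>\pi(k+1)$. If $\tau$ starts at $k+1$ we are done. Otherwise we replace $\tau_1$ by $\pi(k+1)$ if $\pi(k+1)<\tau_1$, which keeps $\tau$ increasing. If $\pi(k+1) > \tau_1$, we prepend: since $\pi(k) > \pi(k+1) > \tau_1$, the concatenation of the length-$i$ sequence ending at $k$ with $\tau$ already has a descent at the junction. This yields length $i + \is(\pi_{[k+1,n]})$ with descent set $\{i\}$.
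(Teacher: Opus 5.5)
Your final argument is correct, but it takes a different route from the paper. The paper gives no standalone proof. It presents the statement as a reformulation of \Cref{gsievac}, reading $u_i = Q(1,i)$ as the leftmost position where an increasing subsequence of length $i$ can end and $v_j = n+1-\evac(Q)(1,j)$ as the rightmost position where one of length $j$ can start. The good-pair condition ($u_i < v_j$ and $v_j - 1 \in \Des(\pi)$) is then translated into the language of $k$ and $\is(\pi_{[k+1,n]})$. That translation is left to the reader; one has to check, for instance, that the largest $j$ with $v_j \geq k+1$ automatically satisfies $v_j - 1 \in \Des(\pi)$.

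You instead prove both inequalities directly, with no tableaux:
\begin{itemize}
\item \emph{Upper bound.} You find a descent of $\pi$ between the two positions straddling the descent of $\sigma$ and invoke the minimality of $k$.
\item \emph{Lower bound.} Minimality of $k$ forces $u_i$ into the ascending run ending at $k$. So a length-$i$ increasing subsequence can be made to end exactly at the descent top $\pi(k)$. The junction descent is then secured by at most replacing $\tau_1$ with $\pi(k+1)$.
\end{itemize}
This is the mirror image of the construction in the proof of \Cref{gsievac}, which instead moves the end of the first piece to the descent top $\pi(v_j-1)$ just before the second piece. Your version is self-contained and elementary, and running the same translation in reverse would recover \Cref{gsievac}. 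The paper's version instead expresses $\ls_{\{i\}}(\pi)$ explicitly through the first rows of $Q(\pi)$ and $\evac(Q(\pi))$, which is the tableau-theoretic point of the paper.

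The write-up needs pruning:
\begin{itemize}
\item Discard the first two paragraphs (the attempts ending in ``This is the delicate point''); the final argument supersedes them.
\item Choosing $\rho$ with $m$ maximal and the preliminary check $\pi(m) > \tau_1$ are unnecessary.
\item The hypothesis ``Suppose $\pi(k) > \tau_1$'' is never used. Your three cases ($\tau$ starts at $k+1$, $\pi(k+1) < \tau_1$, or $\pi(k+1) > \tau_1$) already cover everything, including $\pi(k) < \tau_1$.
\item ``Extends to one ending at $k$'' should say that you replace the last term $\pi(u_i)$ by $\pi(k)$. This preserves monotonicity because $\pi(u_i) \leq \pi(k)$.
\end{itemize}
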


From definitions, it is clear that for any permutation $\pi$, we have $\ls_{\{i\}}(\pi) \leq \is(\pi) + i$. 
In the case $i = 1$, we characterize when this bound is (not) met.

\begin{corollary}
    For a permutation $\pi \in \s_n$, we have $\ls_{\{1\}}(\pi) < \is(\pi) + 1$ if and only if the last entry of the first row of $\evac(Q(\pi))$ is $n$. 
    The number of such permutations is given by
    \begin{equation*}
        \sum_{\lambda \vdash n - 1} f^{\lambda + 1} f^{\lambda}
    \end{equation*}
    where $\lambda + 1 = (\lambda_1 + 1, \lambda _2, \ldots)$ for any partition $\lambda$.
\end{corollary}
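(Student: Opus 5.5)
The plan is to specialize \Cref{gsievac} to $i = 1$ and read off exactly when the upper bound $\is(\pi) + 1$ fails. Write $m = \is(\pi)$, $Q = Q(\pi)$, $Q_e = \evac(Q)$, and let $v_1 < \cdots < v_m$ be the first row of $Q_e$ (of length $m$, since evacuation preserves shape). Since $Q(1,1) = 1$, \Cref{gsievac} gives
\begin{equation*}
    \ls_{\{1\}}(\pi) = \max\{1 + j \mid v_j \leq n - 1 \text{ and } Q_e(1, j+1) \neq v_j + 1\}.
\end{equation*}
Here the second condition is automatically satisfied when $j = m$, because $Q_e(1, m+1)$ does not exist. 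This convention is the one implicit in the proof of \Cref{gsievac}: there the condition is equivalent to $n - v_j \in \Des(\pi)$. For $j = m$ this holds whenever $v_m < n$, since otherwise $\pi(n - v_m) < \pi(n - v_m + 1)$ would extend an increasing subsequence of length $m$ starting at position $n - v_m + 1$ to one of length $m+1$. I would check this point carefully, as it is the only subtle step.

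With this, the equivalence is immediate. If $v_m \neq n$, then $v_m \leq n-1$, so $j = m$ is admissible and $\ls_{\{1\}}(\pi) \geq m + 1$. Combined with the bound $\ls_{\{1\}}(\pi) \leq \is(\pi) + 1$ noted before the corollary, this gives equality. If $v_m = n$, then $j = m$ is not admissible, and every admissible $j$ satisfies $j \leq m - 1$, so $\ls_{\{1\}}(\pi) \leq m < \is(\pi) + 1$. This also covers the identity permutation, where $\ls_{\{1\}} = 0$ and $Q_e$ is a single row ending in $n$.

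For the enumeration, I would use RSK: the number of $\pi$ with $Q(\pi) = Q$ equals $f^{\lambda}$, where $\lambda$ is the shape of $Q$. Since $\evac$ is a shape-preserving involution, hence a bijection on SYTs of shape $\lambda$, the count becomes
\begin{equation*}
    \sum_{\lambda \vdash n} f^{\lambda} \cdot \#\{T \text{ SYT of shape } \lambda \mid n \text{ is the last entry of the first row of } T\}.
\end{equation*}
Removing the box containing $n$ from such a $T$ gives a bijection onto SYTs of shape $\mu \vdash n-1$, where $\lambda = \mu + 1$. Conversely, adding a box at the end of the first row of any $\mu \vdash n - 1$ always yields a partition, and filling it with $n$ gives a valid SYT. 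Reindexing by $\mu$ yields $\sum_{\mu \vdash n-1} f^{\mu+1} f^{\mu}$, as claimed.
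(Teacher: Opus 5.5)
Your proof is correct and follows the paper's route. For the characterization you specialize \Cref{gsievac} to $i=1$; your check that the second condition at $j=m$ is harmless (since $n-\evac(Q)(1,m)$ is automatically a descent when $\evac(Q)(1,m)<n$) fills in a boundary case the paper leaves implicit. For the count you use RSK together with the box-removal bijection, where the paper instead passes through $\pi\mapsto\pi^{rc}$ and $Q(\pi^{rc})=\evac(Q(\pi))$ rather than using that $\evac$ is a bijection on tableaux of a fixed shape, which amounts to the same thing.
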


\begin{proof}
    The characterization of $\pi$ for which $\ls_{\{1\}}(\pi) < \is(\pi) + 1$ can be derived using \Cref{gsievac}. 
    These are precisely the permutations where any longest increasing subsequence must use the first term of the permutation. 
    To count these permutations, we note that their reverse complements correspond, under RSK, to pairs $(P, Q)$ of SYTs of the same shape where the last entry of the first row of $Q$ is $n$.
\end{proof}

% \begin{proposition}
%     \textcolor{red}{formula for number of $\pi$ such that $\ls_{\{i\}}(\pi) = k$ using Littlewood Richardson}
% \end{proposition}

\subsection{The general case}

We now move on to the general statistic $\ls_D$. 
The proof of the following result is similar to that of \Cref{gsinonzero}.

\begin{proposition}
    Let $\pi$ be a non-identity permutation of size $n$. 
    Let $D = D' \sqcup [i, i + l - 1]$ where $i$ is the largest element of $D$ such that $i - 1 \notin D$. 
    Let $k$ be the largest index such that $\pi_{[k, n]}$ has a decreasing subsequence of length $l$. 
    We have, $\ls_D(\pi) \neq 0$ if and only if $i \leq \ls_{D'}(\pi_{[k]})$.
\end{proposition}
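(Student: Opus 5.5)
We would prove both directions directly by locating the decreasing block. We mirror the exchange argument from the proof of \Cref{gsinonzero}.

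First we fix how we read the statement. Since $i-1\notin D$ and $i$ is the start of the last maximal interval of $D$, every element of $D'$ is at most $i-2$. A sequence $\sigma$ has $\Des(\sigma)=D$ exactly when three things hold: the prefix $\sigma_1\cdots\sigma_i$ has descent set $D'$; the block $\sigma_i>\sigma_{i+1}>\cdots>\sigma_{i+l}$ is decreasing; and $\sigma$ is increasing from position $i+l$ on. The decreasing block has $l$ descents but $l+1$ entries.

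We take the "decreasing subsequence of length $l$" in the definition of $k$ to mean this block, i.e.\ $l+1$ entries. Under the literal reading of $l$ entries, the statement fails. For example, take $\pi=213$ and $D=\{2\}$. Then $k=3$ and $\ls_{\varnothing}(213)=2\ge 2$, yet $\ls_{\{2\}}(213)=0$. So $k$ is the largest index such that $\pi_{[k,n]}$ contains a decreasing subsequence with $l+1$ entries.

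We also record a truncation fact. Since $\max D'\le i-2$, any subsequence with descent set $D'$ and length at least $i$ can be cut to its first $i$ entries without changing the descent set. Hence $i\le\ls_{D'}(\pi_{[k]})$ is equivalent to $\pi_{[k]}$ having a subsequence of length exactly $i$ with descent set $D'$.

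For the forward direction, we take $\sigma=\pi_I$ with $\Des(\sigma)=D$. Let $p_1<\cdots<p_{i+l}$ be its first $i+l$ indices. The entries at $p_i,\dots,p_{i+l}$ form a decreasing subsequence of $\pi_{[p_i,n]}$ with $l+1$ entries. By maximality of $k$ we get $p_i\le k$. So $\pi_{\{p_1,\dots,p_i\}}$ is a subsequence of $\pi_{[k]}$ of length $i$ with descent set $D'$, and $\ls_{D'}(\pi_{[k]})\ge i$.

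For the converse, we take indices $p_1<\cdots<p_i\le k$ such that $\tau=\pi_{\{p_1,\dots,p_i\}}$ has descent set $D'$. We also take a decreasing subsequence at indices $q_0<q_1<\cdots<q_l$ in $[k,n]$. Maximality of $k$ forces $q_0=k$. There are two cases.

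\begin{itemize}
\item If $p_i=k$, or $p_i<k$ and $\pi(p_i)>\pi(k)$, we append $q_1,\dots,q_l$ to $\tau$. In the second subcase $\pi(p_i)>\pi(k)>\pi(q_1)$, and in the first $\pi(p_i)=\pi(k)>\pi(q_1)$. Either way this creates descents exactly at $i,\dots,i+l-1$.
\item If $\pi(p_i)<\pi(k)$, we replace $p_i$ by $k$ and then append $q_1,\dots,q_l$. When $i\ge2$, the comparison at position $i-1$ must stay an ascent because $i-1\notin D'$. It does, since $\tau_{i-1}<\pi(p_i)<\pi(k)$. When $i=1$ there is nothing to check.
\end{itemize}

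In both cases we obtain a subsequence of length $i+l$ whose descent set is $D'\sqcup[i,i+l-1]=D$. Therefore $\ls_D(\pi)\ne0$.

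The main obstacle is the boundary situation in the converse: the last entry of the $D'$-part must both exceed the start of the decreasing block and lie strictly to its left. The replacement trick handles this, exactly as in \Cref{gsinonzero}. The other point to get right is the indexing convention for the length of the decreasing run, fixed above.
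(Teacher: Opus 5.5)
Your proof is correct and is essentially the argument the paper intends when it says the proof is similar to that of \Cref{gsinonzero}. For the forward direction you use maximality of $k$ to place the start of the decreasing block at an index at most $k$; for the converse you glue a length-$i$ $D'$-subsequence of $\pi_{[k]}$ to the decreasing block starting at $k$, swapping its last entry for $\pi(k)$ when that entry is smaller. Reading the decreasing subsequence as having $l+1$ entries is the right correction of an off-by-one in the statement: your example $\pi=213$, $D=\{2\}$ shows the literal version fails, and for $l=1$ your $k$ is the last descent, matching \Cref{gsinonzero}. The only unaddressed corner is when no such subsequence exists, and then $\ls_D(\pi)=0$ trivially.
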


% \textcolor{red}{K: We can also omit the Knuth transformation stuff and just use the above proposition (and induction) to show that $\ls_D$ only depends on $Q$}

% \begin{proposition}
%     For any permutation $\pi \in \s_n$ and $m \geq 1$, we have $\ls_{[m]}(\pi) \neq 0$ if and only if $\is(\pi^r) \geq m + 1$. 
%     In this case, we have
%     \begin{equation*}
%         \ls_{[m]}(\pi) = m + \is(\pi_k \pi_{k + 1} \cdots \pi_n)
%     \end{equation*}
%     where $k$ is the smallest index such that $\is(\pi_k \pi_{k - 1} \cdots \pi_1) = m + 1$.
% \end{proposition}

The above proposition allows us to inductively determine if $\ls_D(\pi) \neq 0$ by breaking up $D$ into intervals. 
We now describe a method to extract $\ls_D(\pi)$ using this break up of $D$. 
We use the notation $\ds(\pi)$ to denote the length of a longest decreasing subsequence of $\pi$.

\begin{proposition}
    Let $\pi \in \s_n$ and $D \subseteq [n - 1]$ such that $\ls_D(\pi) \neq 0$. 
    If $D = \varnothing$, we have $\ls_D(\pi) = \is(\pi)$. 
    Otherwise, we have
    \begin{equation*}
        \ls_D(\pi) = m + \ls_{D'}(\pi_{[k, n]})
    \end{equation*}
    where $D' = \{i - m \mid i \in D \setminus [m]\}$ and
    \begin{itemize}
        \item if $1 \in D$, then $m$ is the smallest number such that $m + 1 \notin D$ and $k$ is the smallest index such that $\ds(\pi_{[k]}) = m + 1$, and
        \item if $1 \notin D$, then $m = \min D - 1$ and $k$ is the smallest index such that $\is(\pi_{[k]}) = m + 1$.
    \end{itemize}
\end{proposition}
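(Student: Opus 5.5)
The plan is to prove the two inequalities $\ls_D(\pi) \leq m + \ls_{D'}(\pi_{[k, n]})$ and $\ls_D(\pi) \geq m + \ls_{D'}(\pi_{[k, n]})$ directly, by splitting an optimal subsequence into a monotone prefix and a tail and using an exchange argument, in the spirit of \Cref{gsievac}. The case $D = \varnothing$ is just the definition, since $\ls_\varnothing = \is$. So assume $D \neq \varnothing$.

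First I record the structure of a subsequence $\sigma$ with $\Des(\sigma) = D$.
\begin{itemize}
\item If $1 \notin D$, then $m + 1 = \min D$. The first $m+1$ entries of $\sigma$ are increasing, and the tail $\tau = \sigma_{m+1}\sigma_{m+2}\cdots$ (starting at the $(m+1)$-th entry) has descent set exactly $D'$. In this case $1 \in D'$, so $\tau_1 > \tau_2$.
\item If $1 \in D$, then the first $m+1$ entries are decreasing. The tail $\tau$ again has descent set $D'$, and now $1 \notin D'$ because $m+1 \notin D$, so $\tau_1 < \tau_2$ when $\tau$ has length at least $2$.
\end{itemize}
Conversely, suppose we have a monotone prefix $a_1, \ldots, a_m$ of the right type and a tail $\tau$ with $\Des(\tau) = D'$, with the right comparison between $a_m$ and $\tau_1$. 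Then their concatenation has descent set exactly $D$.

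\emph{Upper bound.} Take $\sigma$ with $\Des(\sigma) = D$ of length $\ls_D(\pi)$, and let $p$ be the index in $\pi$ of its $(m+1)$-th entry. The first $m+1$ entries form an increasing (respectively decreasing) subsequence of length $m+1$ of $\pi_{[p]}$. By minimality of $k$ this forces $p \geq k$. Hence the tail is a subsequence of $\pi_{[k, n]}$ with descent set $D'$, which gives $\ls_D(\pi) \leq m + \ls_{D'}(\pi_{[k, n]})$. The same argument shows that $k$ exists at all, since $\ls_D(\pi) \neq 0$.

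\emph{Lower bound.} By definition of $k$, there is a monotone subsequence $a_1, \ldots, a_m, \pi(k)$ of $\pi_{[k]}$ ending at index $k$; it is increasing if $1 \notin D$ and decreasing if $1 \in D$. Take $\tau$ in $\pi_{[k, n]}$ with $\Des(\tau) = D'$ of maximal length (nonempty, by the upper-bound step), starting at index $q \geq k$.
\begin{itemize}
\item If $q = k$, prepend $a_1, \ldots, a_m$ directly.
\item If $q > k$ and $\tau_1$ already compares correctly with $a_m$ (that is, $\tau_1 > a_m$ when $1 \notin D$, or $\tau_1 < a_m$ when $1 \in D$), prepend as well.
\item Otherwise, replace $\tau_1 = \pi(q)$ by $\pi(k)$, which keeps the length. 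When $1 \notin D$ we have $\pi(q) < a_m < \pi(k)$, so $a_m < \pi(k)$ and $\pi(k) > \pi(q) > \tau_2$, and the descent at position $1$ of $\tau$ is preserved. When $1 \in D$ we have $\pi(q) > a_m > \pi(k)$, so $a_m > \pi(k)$ and $\pi(k) < \pi(q) < \tau_2$, and the ascent at position $1$ of $\tau$ is preserved.
\end{itemize}
In every case all other descents and ascents of $\tau$ are unchanged, so we obtain a subsequence of $\pi$ with descent set $D$ and length $m + \ls_{D'}(\pi_{[k, n]})$.

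The main obstacle is this replacement step. One has to check carefully that swapping the first tail entry for $\pi(k)$ simultaneously fixes the junction with the prefix and preserves $\Des(\tau) = D'$, including the degenerate cases where $\tau$ has length $1$ or $D' = \varnothing$. The key input is the observation above that $1 \in D'$ exactly when $1 \notin D$.
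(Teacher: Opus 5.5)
Your proof is correct and follows the paper's route. The upper bound comes from minimality of $k$, which you actually justify while the paper only asserts it. The lower bound glues a monotone run of length $m+1$ ending at index $k$ to an optimal $D'$-subsequence of $\pi_{[k,n]}$, dropping either $\pi(k)$ or the tail's first entry. Your case split, comparing $\tau_1$ with $a_m$, is a harmless variant of the paper's comparison of $\pi(k)$ with $\tau_1$. Because you treat both cases explicitly, you also avoid a slip in the paper's write-up: for $1 \in D$ it states the two subcases with the inequalities swapped. It uses the assignment that is correct only when $1 \notin D$.
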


\begin{proof}
    We only consider the case when $1 \in D$, the other case can be proved similarly. 
    By the definition of $m$ and $k$, we must have that $\ls_D(\pi) \leq m + \ls_{D'}(\pi_{[k, n]})$. 
    We prove the equality by constructing a subsequence of $\pi$ with descent set $D$ of the required length.

    Let $\pi_K$ be a decreasing subsequence of length $m + 1$ with $\max K = k$ and $\pi_J$ be a subsequence with $J \subseteq [k, n]$ and $\Des(\pi_J) = D'$. 
    Let $j = \min J$. 
    If $\pi_k \leq \pi_j$, then consider $I = (K \setminus \{k\}) \cup J$ and note that $\Des(\pi_I) = D$. 
    Similarly, if $\pi_k > \pi_j$, then consider $I = K \cup (J \setminus \{j\})$.
\end{proof}

We unravel the inductive method mentioned above to describe a procedure to extract $\ls_D(\pi)$ directly. 
To do this, it is convenient to represent the set $D$ as a composition. 
To any composition $c = (c_1, c_2, \ldots, c_k)$, we associate a set $D_c$ as follows: 
Set $d_i = c_1 + c_2 + \cdots + c_i$ for all $i \in [k]$. 
If $k$ is odd, then $$D_c = [d_1] \cup [d_2 + 1, d_3] \cup \cdots \cup [d_{k - 1} + 1, d_k].$$
Similarly, if $k$ is even, then $$D_c = [d_1 + 1, d_2] \cup [d_3 + 1, d_4] \cup \cdots \cup [d_{k - 1} + 1, d_k].$$
For example, $D_{(2, 3, 1)} = \{1, 2, 6\}$.

To extract $\ls_D(\pi)$, we consider a triangle of pairs that record the longest increasing and decreasing subsequence lengths for factors of $\pi$.
\begin{definition}
    For a permutation $\pi \in \s_n$ and $1 \leq i \leq j \leq n$,
    \begin{equation*}
        a_{i, j}(\pi) \coloneqq (\is(\pi_{[i, j]}), \ds(\pi_{[i, j]})).
    \end{equation*}
\end{definition}

We draw the triangle of $a_{i, j}(\pi)$ as shown in \Cref{lsaijex1}. 
We use blue entries for longest increasing subsequence lengths and red for decreasing. 
If the triangle is rotated $45$ degrees clockwise, then the labeling $i, j$ is just like that of a matrix. 
Note that the values $a_{i, j}(\pi)$ are the lengths of the firsts rows and columns of the (nonempty) partitions in the evacuation growth diagram of $Q(\pi)$.

\begin{example}
    For $\pi = 314526$, we have $a_{2, 5}(\pi) = ({\color{blue}3}, {\color{red}2})$ since the length of a longest increasing subsequence of $1452$ is $3$ and that of a longest decreasing subsequence is $2$.
\end{example}

We now describe an algorithm to extract $\ls_D(\pi)$ using the triangle of $a_{i, j}(\pi)$ values.

\begin{algorithm}\label{lsDalgo}
Given a composition $c = (c_1, \ldots, c_k)$ and a permutation $\pi$, we extract $\ls_{D_c}(\pi)$ as follows. 
We consider the case when $k$ is even, the other case is analogous.
\begin{enumerate}
    \item Find the smallest value of $j_1$ such that the second entry of $a_{1, j_1}(\pi)$ is $c_1 + 1$. 
    If no such $j_1$ exists, then $\ls_{D_c}(\pi) = 0$.

    \item If $k > 1$, find the smallest value of $j_2$ such that the first entry of $a_{j_1, j_2}(\pi)$ is $c_2 + 1$. 
    If no such $j_2$ exists, then $\ls_{D_c}(\pi) = 0$.

    \item Continue this way till either we get $\ls_{D_c}(\pi) = 0$ at some step or we find $j_k$. 
    If the first entry of $a_{j_k, n}(\pi)$ is $l$, then $\ls_{D_c}(\pi) = c_1 + c_2 + \cdots + c_k + l$.
\end{enumerate}
\end{algorithm}

\begin{example}
    Let $\pi = 3247516$ and $D = \{2, 3\}$. 
    To extract $\ls_D(\pi)$ from the $a_{i, j}(\pi)$ triangle, note that $D = D_{(1, 2)}$ (see \Cref{lsaijex1}). 
    The steps to extract $\ls_D(\pi)$ is as follows.
    \begin{enumerate}
        \item We move up from the bottom-left entry until we see a term with first (blue) entry $2 = c_1 + 1$. 
        Here we find it at $a_{1, 3}(\pi)$.

        \item  We then move down to the bottom of the triangle. 
        We then move up until we see a term with second (red) entry $3 = c_2 + 1$. 
        Here we find it at $a_{3, 6}(\pi)$.
        
        \item We then move down to the bottom of the triangle. 
        Now we keep moving up for as much as we can, and then record the first (blue) entry in this term. 
        Here it is $a_{6, 7}(\pi)$, whose first entry is $2$. 
        Hence, we have $\ls_D(\pi) = 1 + 2 + 2 = 5$.
    \end{enumerate}
\end{example}

\begin{figure}[ht]
    \centering
    \begin{tikzpicture}[scale = 1.2, yscale = 1.4, rotate = -45]
        \node (a11) at (1, 1) {\scriptsize$( {\color{blue}1}, {\color{red}1} )$};
        \node (a12) at (1, 2) {\scriptsize$( {\color{blue}1}, {\color{red}2} )$};
        \node (a13) at (1, 3) {\scriptsize$( {\color{blue}2}, {\color{red}2} )$};
        \node (a14) at (1, 4) {\scriptsize$( {\color{blue}3}, {\color{red}2} )$};
        \node (a15) at (1, 5) {\scriptsize$( {\color{blue}3}, {\color{red}2} )$};
        \node (a16) at (1, 6) {\scriptsize$( {\color{blue}3}, {\color{red}3} )$};
        \node (a17) at (1, 7) {\scriptsize$( {\color{blue}4}, {\color{red}3} )$};
        \node (a22) at (2, 2) {\scriptsize$( {\color{blue}1}, {\color{red}1} )$};
        \node (a23) at (2, 3) {\scriptsize$( {\color{blue}2}, {\color{red}1} )$};
        \node (a24) at (2, 4) {\scriptsize$( {\color{blue}3}, {\color{red}1} )$};
        \node (a25) at (2, 5) {\scriptsize$( {\color{blue}3}, {\color{red}2} )$};
        \node (a26) at (2, 6) {\scriptsize$( {\color{blue}3}, {\color{red}3} )$};
        \node (a27) at (2, 7) {\scriptsize$( {\color{blue}4}, {\color{red}3} )$};
        \node (a33) at (3, 3) {\scriptsize$( {\color{blue}1}, {\color{red}1} )$};
        \node (a34) at (3, 4) {\scriptsize$( {\color{blue}2}, {\color{red}1} )$};
        \node (a35) at (3, 5) {\scriptsize$( {\color{blue}2}, {\color{red}2} )$};
        \node (a36) at (3, 6) {\scriptsize$( {\color{blue}2}, {\color{red}3} )$};
        \node (a37) at (3, 7) {\scriptsize$( {\color{blue}3}, {\color{red}3} )$};
        \node (a44) at (4, 4) {\scriptsize$( {\color{blue}1}, {\color{red}1} )$};
        \node (a45) at (4, 5) {\scriptsize$( {\color{blue}1}, {\color{red}2} )$};
        \node (a46) at (4, 6) {\scriptsize$( {\color{blue}1}, {\color{red}3} )$};
        \node (a47) at (4, 7) {\scriptsize$( {\color{blue}2}, {\color{red}3} )$};
        \node (a55) at (5, 5) {\scriptsize$( {\color{blue}1}, {\color{red}1} )$};
        \node (a56) at (5, 6) {\scriptsize$( {\color{blue}1}, {\color{red}2} )$};
        \node (a57) at (5, 7) {\scriptsize$( {\color{blue}2}, {\color{red}2} )$};
        \node (a66) at (6, 6) {\scriptsize$( {\color{blue}1}, {\color{red}1} )$};
        \node (a67) at (6, 7) {\scriptsize$( {\color{blue}2}, {\color{red}1} )$};
        \node (a77) at (7, 7) {\scriptsize$( {\color{blue}1}, {\color{red}1} )$};
        \draw[ultra thick, blue] (a11) -- (a12);
        \draw[ultra thick, blue] (a12) -- (a13);
        \draw (a13) -- (a14);
        \draw (a14) -- (a15);
        \draw (a15) -- (a16);
        \draw (a16) -- (a17);
        \draw (a22) -- (a23);
        \draw (a23) -- (a24);
        \draw (a24) -- (a25);
        \draw (a25) -- (a26);
        \draw (a26) -- (a27);
        \draw[ultra thick, red] (a33) -- (a34);
        \draw[ultra thick, red] (a34) -- (a35);
        \draw[ultra thick, red] (a35) -- (a36);
        \draw (a36) -- (a37);
        \draw (a44) -- (a45);
        \draw (a45) -- (a46);
        \draw (a46) -- (a47);
        \draw (a55) -- (a56);
        \draw (a56) -- (a57);
        \draw[ultra thick, blue] (a66) -- (a67);
        \draw (a12) -- (a22);
        \draw[ultra thick, blue] (a13) -- (a23);
        \draw[ultra thick, blue] (a23) -- (a33);
        \draw (a14) -- (a24);
        \draw (a24) -- (a34);
        \draw (a34) -- (a44);
        \draw (a15) -- (a25);
        \draw (a25) -- (a35);
        \draw (a35) -- (a45);
        \draw (a45) -- (a55);
        \draw (a16) -- (a26);
        \draw (a26) -- (a36);
        \draw[ultra thick, red] (a36) -- (a46);
        \draw[ultra thick, red] (a46) -- (a56);
        \draw[ultra thick, red] (a56) -- (a66);
        \draw (a17) -- (a27);
        \draw (a27) -- (a37);
        \draw (a37) -- (a47);
        \draw (a47) -- (a57);
        \draw (a57) -- (a67);
        \draw[ultra thick, blue] (a67) -- (a77);
        % \node at (1.25, 0.75) {\tiny 1};
        % \node at (2.25, 1.75) {\tiny 2};
        % \node at (3.25, 2.75) {\tiny 3};
        % \node at (4.25, 3.75) {\tiny 4};
        % \node at (5.25, 4.75) {\tiny 5};
        % \node at (6.25, 5.75) {\tiny 6};
        % \node at (7.25, 6.75) {\tiny 7};
    \end{tikzpicture}
    \caption{Extracting $\ls_{\{2, 3\}}(\pi) = 5$ from the $a_{i, j}(\pi)$ triangle.}
    \label{lsaijex1}
\end{figure}
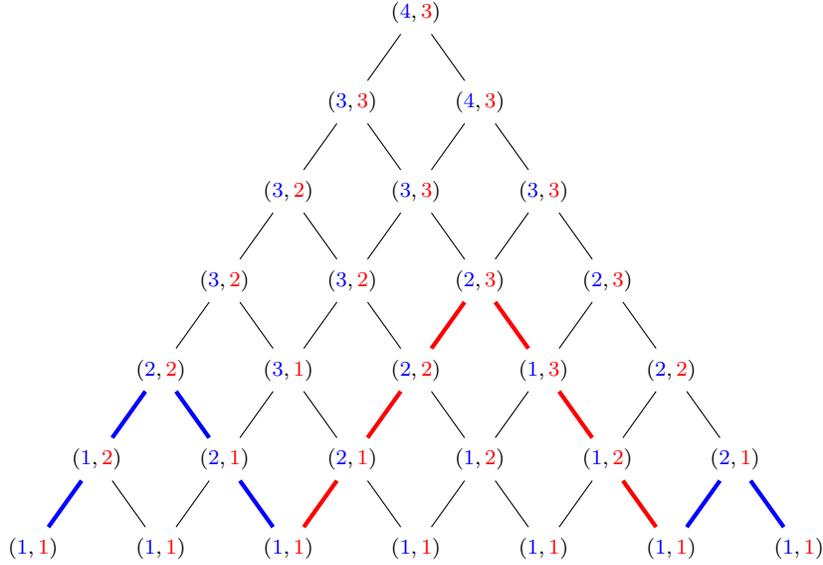

\begin{example}
    Suppose $\pi$ is a permutation such that the $a_{i, j}(\pi)$ triangle is given by \Cref{lsaijex2}. 
    Then we must have $\ls_{\{1, 2, 4, 5\}}(\pi) = 0$ since we cannot finish the procedure to extract $\ls_D(\pi)$.
\end{example}

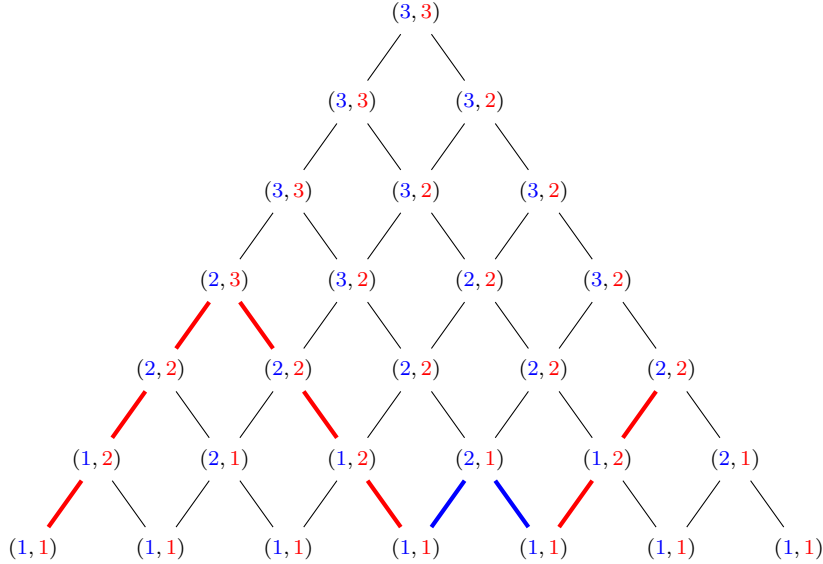
\begin{figure}[H]
    \centering
    \begin{tikzpicture}[scale = 1.2, yscale = 1.4, rotate = -45]
        \node (a11) at (1, 1) {\scriptsize$( {\color{blue}1}, {\color{red}1} )$};
        \node (a12) at (1, 2) {\scriptsize$( {\color{blue}1}, {\color{red}2} )$};
        \node (a13) at (1, 3) {\scriptsize$( {\color{blue}2}, {\color{red}2} )$};
        \node (a14) at (1, 4) {\scriptsize$( {\color{blue}2}, {\color{red}3} )$};
        \node (a15) at (1, 5) {\scriptsize$( {\color{blue}3}, {\color{red}3} )$};
        \node (a16) at (1, 6) {\scriptsize$( {\color{blue}3}, {\color{red}3} )$};
        \node (a17) at (1, 7) {\scriptsize$( {\color{blue}3}, {\color{red}3} )$};
        \node (a22) at (2, 2) {\scriptsize$( {\color{blue}1}, {\color{red}1} )$};
        \node (a23) at (2, 3) {\scriptsize$( {\color{blue}2}, {\color{red}1} )$};
        \node (a24) at (2, 4) {\scriptsize$( {\color{blue}2}, {\color{red}2} )$};
        \node (a25) at (2, 5) {\scriptsize$( {\color{blue}3}, {\color{red}2} )$};
        \node (a26) at (2, 6) {\scriptsize$( {\color{blue}3}, {\color{red}2} )$};
        \node (a27) at (2, 7) {\scriptsize$( {\color{blue}3}, {\color{red}2} )$};
        \node (a33) at (3, 3) {\scriptsize$( {\color{blue}1}, {\color{red}1} )$};
        \node (a34) at (3, 4) {\scriptsize$( {\color{blue}1}, {\color{red}2} )$};
        \node (a35) at (3, 5) {\scriptsize$( {\color{blue}2}, {\color{red}2} )$};
        \node (a36) at (3, 6) {\scriptsize$( {\color{blue}2}, {\color{red}2} )$};
        \node (a37) at (3, 7) {\scriptsize$( {\color{blue}3}, {\color{red}2} )$};
        \node (a44) at (4, 4) {\scriptsize$( {\color{blue}1}, {\color{red}1} )$};
        \node (a45) at (4, 5) {\scriptsize$( {\color{blue}2}, {\color{red}1} )$};
        \node (a46) at (4, 6) {\scriptsize$( {\color{blue}2}, {\color{red}2} )$};
        \node (a47) at (4, 7) {\scriptsize$( {\color{blue}3}, {\color{red}2} )$};
        \node (a55) at (5, 5) {\scriptsize$( {\color{blue}1}, {\color{red}1} )$};
        \node (a56) at (5, 6) {\scriptsize$( {\color{blue}1}, {\color{red}2} )$};
        \node (a57) at (5, 7) {\scriptsize$( {\color{blue}2}, {\color{red}2} )$};
        \node (a66) at (6, 6) {\scriptsize$( {\color{blue}1}, {\color{red}1} )$};
        \node (a67) at (6, 7) {\scriptsize$( {\color{blue}2}, {\color{red}1} )$};
        \node (a77) at (7, 7) {\scriptsize$( {\color{blue}1}, {\color{red}1} )$};
        \draw[ultra thick, red] (a11) -- (a12);
        \draw[ultra thick, red] (a12) -- (a13);
        \draw[ultra thick, red] (a13) -- (a14);
        \draw (a14) -- (a15);
        \draw (a15) -- (a16);
        \draw (a16) -- (a17);
        \draw (a22) -- (a23);
        \draw (a23) -- (a24);
        \draw (a24) -- (a25);
        \draw (a25) -- (a26);
        \draw (a26) -- (a27);
        \draw (a33) -- (a34);
        \draw (a34) -- (a35);
        \draw (a35) -- (a36);
        \draw (a36) -- (a37);
        \draw[ultra thick, blue] (a44) -- (a45);
        \draw (a45) -- (a46);
        \draw (a46) -- (a47);
        \draw[ultra thick, red] (a55) -- (a56);
        \draw[ultra thick, red] (a56) -- (a57);
        \draw (a66) -- (a67);
        \draw (a12) -- (a22);
        \draw (a13) -- (a23);
        \draw (a23) -- (a33);
        \draw[ultra thick, red] (a14) -- (a24);
        \draw[ultra thick, red] (a24) -- (a34);
        \draw[ultra thick, red] (a34) -- (a44);
        \draw (a15) -- (a25);
        \draw (a25) -- (a35);
        \draw (a35) -- (a45);
        \draw[ultra thick, blue] (a45) -- (a55);
        \draw (a16) -- (a26);
        \draw (a26) -- (a36);
        \draw (a36) -- (a46);
        \draw (a46) -- (a56);
        \draw (a56) -- (a66);
        \draw (a17) -- (a27);
        \draw (a27) -- (a37);
        \draw (a37) -- (a47);
        \draw (a47) -- (a57);
        \draw (a57) -- (a67);
        \draw (a67) -- (a77);
        % \node at (1.25, 0.75) {\tiny 1};
        % \node at (2.25, 1.75) {\tiny 2};
        % \node at (3.25, 2.75) {\tiny 3};
        % \node at (4.25, 3.75) {\tiny 4};
        % \node at (5.25, 4.75) {\tiny 5};
        % \node at (6.25, 5.75) {\tiny 6};
        % \node at (7.25, 6.75) {\tiny 7};
    \end{tikzpicture}
    \caption{Showing $\ls_{\{1, 2, 4, 5\}}(\pi) = 0$ using the $a_{i, j}(\pi)$ triangle.}
    \label{lsaijex2}
\end{figure}

From the algorithm described above, we can see that for any $D$, the value $\ls_D(\pi)$ only depends on the values $a_{i, j}(\pi)$ as $i, j$ varies over $1 \leq i < j \leq n$. 
We have mentioned that these values can be extracted from the evacuation growth diagram of $Q(\pi)$. 
This gives us the following.

\begin{corollary}
    For any permutation $\pi \in \s_n$ and $D \subseteq [n - 1]$, the value $\ls_D(\pi)$ only depends on $Q(\pi)$.
\end{corollary}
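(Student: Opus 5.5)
The plan is to show that $\ls_D(\pi)$ is a function of the triangle $\bigl(a_{i,j}(\pi)\bigr)_{1 \le i \le j \le n}$, and that this triangle is a function of $Q(\pi)$. The second half comes from the preliminaries. By the growth diagram facts, $\Lambda_{i-1,j}$ in the evacuation growth diagram of $Q(\pi)$ is the shape of $Q(\pi_{[i,j]})$. By Schensted's theorem (the $k=1$ case of Greene's theorem and its transpose), $\is(\pi_{[i,j]})$ is the first row length of this shape and $\ds(\pi_{[i,j]})$ is its first column length. The evacuation growth diagram is determined by $Q(\pi)$ via the local rules. Hence every $a_{i,j}(\pi)$ is determined by $Q(\pi)$. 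I will state this explicitly, noting in particular that the bottom row $\Lambda_{0,j}$ is the chain encoding $Q(\pi)$ and that the local rule fixes all other entries.

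For the first half, I will use \Cref{lsDalgo}, or equivalently the inductive proposition preceding it, by induction on $\#D$ (or on the number of parts of the composition $c$ with $D = D_c$). The case $D=\varnothing$ gives $\ls_\varnothing(\pi) = \is(\pi) = $ the first entry of $a_{1,n}(\pi)$.

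For nonempty $D$, the preceding proposition writes $\ls_D(\pi) = m + \ls_{D'}(\pi_{[k,n]})$. Here $m$ depends only on $D$, and $k$ is the smallest index with $\ds(\pi_{[k]}) = m+1$ (resp.\ $\is(\pi_{[k]}) = m+1$), so $k$ can be read off from the column $a_{1,\cdot}(\pi)$. The factor $\pi_{[k,n]}$ has its own triangle, which is exactly the sub-triangle $(a_{i,j}(\pi))_{k \le i \le j \le n}$. By induction, $\ls_{D'}(\pi_{[k,n]})$ is a function of that sub-triangle.

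Whether $\ls_D(\pi) \neq 0$ is also decided from the triangle. Following the earlier nonvanishing proposition and the algorithm, $\ls_D(\pi) = 0$ exactly when the required index $k$ (or a later $j_t$) fails to exist. That failure is again visible in the $a_{i,j}$ values. The induction therefore shows that $\ls_D(\pi)$ is a function of $(a_{i,j}(\pi))$ alone, and combining with the first paragraph finishes the proof.

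The main obstacle is making the induction airtight. Two points need care. First, the recursive step restarts at the index $k$ itself, overlapping by one position, rather than at $k+1$; I will check that the restriction of the triangle to $[k,n]$ is indeed the triangle of $\pi_{[k,n]}$, which is immediate since factors of $\pi_{[k,n]}$ are factors of $\pi$. Second, the zero case must be handled consistently with the nonvanishing criterion. I will handle it by proving the stronger statement that the whole function $D \mapsto \ls_D(\sigma)$ depends only on the triangle of $\sigma$, for every factor $\sigma$ of $\pi$.
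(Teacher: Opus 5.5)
Your proposal is correct and follows the paper's own argument. The paper deduces the corollary in the same way: the extraction algorithm makes $\ls_D(\pi)$ a function of the triangle $a_{i,j}(\pi)$, and these entries are the first row and first column lengths of the partitions $\Lambda_{i-1,j}$ in the evacuation growth diagram of $Q(\pi)$. Your induction through the recursive proposition just spells out that step, with one small fix: induct on the number of parts of the composition (or on the length of the factor) rather than on $\#D$, since when $1 \notin D$ the passage from $D$ to $D'$ does not decrease $\#D$.
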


However, the converse of the above result is not true. One can check that $\pi = 2147635$ and $\sigma = 3247615$ satisfy $\ls_D(\pi) = \ls_D(\sigma)$ for all sets $D$, but $Q(\pi) \neq Q(\sigma)$.

Although we can not extract $Q(\pi)$ from the values of $\ls_D(\pi)$, we can extract the triangle of values $a_{i, j}(\pi)$, which gives us the following result.

\begin{proposition}\label{equivalence}
    For any two permutations $\pi, \sigma \in \s_n$, the following are equivalent.
    \begin{itemize}
        \item For all $D \subseteq [n - 1]$, we have $\ls_D(\pi) = \ls_D(\sigma)$.

        \item For all $1 \leq i < j \leq n$, we have $a_{i, j}(\pi) = a_{i, j}(\sigma)$.
    \end{itemize}
\end{proposition}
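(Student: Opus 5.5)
One direction, from the second condition to the first, is immediate from \Cref{lsDalgo}. The algorithm computes $\ls_{D_c}(\pi)$ for every composition $c$ using only the values $a_{i,j}(\pi)$ with $i \le j$. The diagonal values $a_{i,i} = (1,1)$ are the same for every permutation, so $\ls_D$ depends only on the values with $i<j$. Every $D \subseteq [n-1]$ has the form $D_c$ for some composition, so if $a_{i,j}(\pi)=a_{i,j}(\sigma)$ for all $i<j$, then $\ls_D(\pi)=\ls_D(\sigma)$ for all $D$.

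For the converse, I will show that each $a_{i,j}(\pi)$ can be recovered from the family $(\ls_D(\pi))_D$. The plan is to express, for each $j$ and each $m$, the smallest indices where patterns occur using a single $\ls_D$. Concretely:

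\begin{itemize}
\item $\ls_{[m]}(\pi)\ge m+1$ holds exactly when $\pi_{[1,n]}$ has a decreasing subsequence of length $m+1$.
\item More usefully, subsequences of the form ``decreasing of length $m+1$, then up-steps'' detect where the first decreasing run can end.
\end{itemize}

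A cleaner route is to recover the triangle row by row. For fixed $i<j$, I need to decide whether $\is(\pi_{[i,j]})\ge p$ and whether $\ds(\pi_{[i,j]})\ge q$. I will encode the condition ``a subsequence lying in positions $[i,j]$'' using padding from outside the window. A subsequence with descent set $[1,i-1]$ and length $i$ uses positions at least $1,\dots,i$. I will therefore look for a descent-set condition that forces the prefix to consume at least $i-1$ positions and the suffix to consume at least $n-j$ positions.

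Since arbitrary prefixes cannot be forced to occupy exact positions, I would instead use induction on $n$ via the smallest-index quantities. The key is the previous proposition, which expresses $\ls_D(\pi)=m+\ls_{D'}(\pi_{[k,n]})$ with $k$ determined by $a_{1,\cdot}$. Write $k_\pi(q)$ for the smallest index with $\ds(\pi_{[k]})=q$, and define $k_\pi'(p)$ analogously using $\is$. The plan has three steps:

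\begin{enumerate}
\item Recover the first row $a_{1,j}(\pi)$ for all $j$, i.e.\ the sequences $k_\pi(q)$ and $k'_\pi(p)$. This equals recovering the first row and first column of $Q(\pi)$.
\item Show that $\ls$-data of $\pi$ determine $\ls$-data of suffixes $\pi_{[k,n]}$ for $k$ among these special indices.
\item Iterate from those suffixes.
\end{enumerate}

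For step 1, I would compare statistics like $\ls_{[m]\cup\{\text{stuff}\}}$ with long increasing tails. For example, $\ls_{[m]}(\pi)-m=\is(\pi_{[k_\pi(m+1),n]})$. Varying $m$ then gives $\is$ of suffixes starting at the points $k_\pi(m+1)$. Symmetric statements with $1\notin D$ give $\ds$ of suffixes starting at the points $k'_\pi(m+1)$ via $D=\{m,\dots\}$.

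The main obstacle is that not every index $i$ is of the form $k_\pi(q)$ or $k'_\pi(p)$, so it is unclear that every $a_{i,j}$ is reached. I would try to show that every index $i\ge2$ is reachable through a chain of such "first-occurrence" indices of iterated suffixes. Every $i$ is either an ascent bottom or a descent bottom relative to $i-1$. Taking $D$ to encode a long alternating prefix whose runs are all of length $1$ should force the chain of $k$'s to pass through every position. In that case, the iterated suffixes $\pi_{[i,n]}$ all become accessible, and their own first rows give $a_{i,j}$.

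If this accessibility fails in general, the fallback is a counting or lexicographic argument: assume $\pi,\sigma$ differ at a lexicographically minimal $(i,j)$ and build a $D$, via the algorithm, whose run of steps follows the common part of the triangle and first diverges at that entry. This yields different values of $\ls_D$.
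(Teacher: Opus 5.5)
Your first direction is fine: \Cref{lsDalgo} computes $\ls_{D_c}(\pi)$ from the triangle alone, the diagonal entries are always $(1,1)$, and every $D$ is some $D_c$. The converse, however, is not proved. Steps 1--3 are announced rather than carried out.

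The identity $\ls_{[m]}(\pi)-m=\is(\pi_{[k_\pi(m+1),n]})$ is correct when $\ls_{[m]}(\pi)\neq 0$. But it gives $\is$ of a suffix starting at an index whose numerical value you never recover, so not even the first row $a_{1,\cdot}(\pi)$ is obtained. The recursion $\ls_D(\pi)=m+\ls_{D'}(\pi_{[k,n]})$ is also silent whenever $\ls_D(\pi)=0$; for the identity permutation every $\ls_D$ with $D\neq\varnothing$ vanishes, so it yields nothing. Your accessibility claim in step 3 is false as stated: for $\pi=23451$ and $D=\{1,3\}$, the first index produced (the smallest $k$ with $\ds(\pi_{[k]})=2$) is $k=5$. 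So runs of length $1$ do not force the chain through every position. The lexicographic fallback is only a hope.

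The missing idea is the padding argument you set aside. Your objection that prefixes cannot be forced into exact positions is beside the point, because counting suffices.
\begin{itemize}
\item Since $\Des(\pi)$ is the unique $D$ with $\ls_D(\pi)=n$, the words $w=\dword(\pi_{[i-1]})$ and $v=\dword(\pi_{[j+1,n]})$ are determined by the data.
\item $\pi$ has a subsequence with descent word $u$ if and only if $\ls_{D(u)}(\pi)\ge |u|+1$, where $D(u)$ is the set of positions of $\desword{D}$ in $u$. For the nontrivial direction, truncate a longer witness to its first $|u|+1$ entries.
\item Let $k$ be maximal such that $\pi$ has a subsequence with descent word $w\,x_1\,\desword{U}^{k-1}\,x_2\,v$ for some $x_1,x_2\in\{\desword{U},\desword{D}\}$ (omit $x_1$ if $i=1$ and $x_2$ if $j=n$).
\item Concatenating $\pi_{[i-1]}$, a longest increasing subsequence of $\pi_{[i,j]}$, and $\pi_{[j+1,n]}$ shows $k\ge \is(\pi_{[i,j]})$.
\item Conversely, in any such subsequence the $(i-1)$-st entry sits at a position $\ge i-1$. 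The first of the last $n-j$ entries sits at a position $\le j+1$. So the $k$ increasing middle entries lie inside $[i,j]$, and $k\le \is(\pi_{[i,j]})$.
\item Replacing $\desword{U}^{k-1}$ by $\desword{D}^{k-1}$ gives $\ds(\pi_{[i,j]})$ in the same way.
\end{itemize}
This is exactly the paper's proof, and it avoids the recursion and the accessibility question entirely.
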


\begin{proof}
    We only have to prove that we can obtain the values $a_{i, j}(\pi)$ from those of $\ls_D(\pi)$. 
    For a given $1 \leq i < j \leq n$, we show how one can obtain $\is(\pi_{[i, j]})$ using the values of $\ls_D(\pi)$. 
    The longest decreasing subsequence length in this factor can be found similarly.
    
    To do this, it is more convenient to use the notation of descent words (see \Cref{desword}). 
    Recall that we can use the values of $\ls_D(\pi)$ to determine if $\pi$ has a subsequence with a given descent word. 
    Let $w = \dword(\pi_{[i - 1]})$ and $v = \dword(\pi_{[j + 1, n]})$. 
    We find the largest value of $k$ such that $\pi$ has a subsequence whose descent word is of the form
    \begin{equation*}
        w\quad x_1 \quad \desword{U}^{k - 1}\quad x_2\quad v
    \end{equation*}
    where $x_1, x_2 \in \{\desword{U}, \desword{D}\}$ and $x_1$ is omitted if $i = 1$ and $x_2$ is omitted if $j = n$. 
    It can be verified that this value of $k$ is precisely $\is(\pi_{[i, j]})$.
\end{proof}
% \textcolor{red}{Checked for $n \leq 10$}

% \begin{proof}
%     We only have to show that if there exist $1 \leq i < j \leq n$ such that $a_{i, j}(\pi) \neq a_{i, j}(\sigma)$, then there exists $D \subseteq [n - 1]$ such that $\ls_D(\pi) \neq \ls_D(\sigma)$. 
%     Let $(i,j)$ be the lexicographically smallest pair such that $a_{i, j}(\pi) \neq a_{i, j}(\sigma)$.

%     We deal with the case when $i > 1$ and $k = \is(\pi_i\pi_{i + 1} \cdots \pi_j) < \is(\sigma_i\sigma_{i + 1} \cdots \sigma_j) = l$. 
%     The other cases can be dealt with similarly. 
%     Let $D' = \Des (\pi_1\pi_2 \cdots \pi_i)$ and by the definition of $i$, we also have $D' = \Des(\sigma_1\sigma_2 \cdots \sigma_i)$.    
% \end{proof}

\section{Future directions}\label{sec:fd}

We now gather some problems that could be of interest to explore.

\begin{enumerate}
    \item An obvious question would be to study the distributions of the statistics mentioned in this paper. 
    For example, are there reasonable ways to count permutations $\pi \in \s_n$ with $\ls_d(\pi) = k$ for given values of $n, d, k$? 
    It might be easier to solve this problem for certain sub-classes of permutations. 
    Restricting to the class of permutations with at most $1$ descent is done in \cite{grass}.

    \item The next direction is combining these statistics with pattern avoidance (basic definitions can be found in \cite[Chapter 4]{bona_book}). 
    Several questions involving the distribution of the statistics $\is$ and $\as$ over classes of pattern-avoiding permutations have been explored \cite{isav,asav,isasav}. 
    One could try to study the distribution of the statistics mentioned in this paper over pattern-avoiding permutations.
    
    Another question, mentioned in \cite[Section 5.3]{stan_as}, involves sets of patterns of the form $P_{k, D} \coloneqq \{\sigma \in \s_k \mid \Des(\sigma) = D\}$. 
    Note that a permutation $\pi$ avoids all the patterns in $P_{k, D}$ if and only if $\ls_D(\pi) < k$. 
    Since $\ls_D(\pi)$ only depends on $Q(\pi)$, it might be interesting to explore avoidance of these sets of patterns for involutions.

    \item \Cref{equivalence} gives two descriptions of an equivalence on permutations. 
    What more can be said about this equivalence? 
    How many equivalences classes are there among permutations of size $n$? 
    Are there any general methods to obtain non-trivial equivalences among permutations?
\end{enumerate}

% \textcolor{red}{If $P_{n, D} = \{\sigma \in \s_n \mid \Des(\sigma) = D\}$, then $\pi \in \operatorname{Av}(P_{n, D})$ if and only if $\ls_D(\pi) < n$ and in particular only depends on $Q(\pi)$. 
% The Stanley-Wilf limit of such patterns is asked in question 3 of Stanley's alternating subsequence paper}

% {\color{red}
% The distribution of the statistic on SYTs $T \rightarrow gs(\operatorname{RSK}^{-1}(T, T))$ for $n=12$ is not log-concave.}

\section*{Acknowledgements}

The first author is supported by the Göran Gustafsson Foundation and the Verg Foundation.  The second author was supported by the Research Initiation Grant (RIG) from IIT Bhilai.

\bibliographystyle{abbrv}
\bibliography{lsdrefs}

@book {stanley_ec2,
    AUTHOR = {Stanley, Richard P.},
     TITLE = {Enumerative combinatorics. {V}ol. 2},
   EDITION = {Second edition},
 PUBLISHER = {Cambridge University Press},
      YEAR = {2024},
     PAGES = {xvi+783},
}

@article {fomin_growth,
    AUTHOR = {Fomin, Sergey},
     TITLE = {Duality of graded graphs},
   JOURNAL = {J. Algebraic Combin.},
  FJOURNAL = {Journal of Algebraic Combinatorics. An International Journal},
    VOLUME = {3},
      YEAR = {1994},
    NUMBER = {4},
     PAGES = {357--404},
      ISSN = {0925-9899,1572-9192},
   MRCLASS = {05C50 (05C20 05C38 05E10)},
  MRNUMBER = {1293822},
MRREVIEWER = {Daniel\ Ashlock},
       DOI = {10.1023/A:1022412010826},
       URL = {https://doi.org/10.1023/A:1022412010826},
}

@article {schensted,
    AUTHOR = {Schensted, C.},
     TITLE = {Longest increasing and decreasing subsequences},
   JOURNAL = {Canadian J. Math.},
  FJOURNAL = {Canadian Journal of Mathematics. Journal Canadien de
              Math\'ematiques},
    VOLUME = {13},
      YEAR = {1961},
     PAGES = {179--191},
      ISSN = {0008-414X,1496-4279},
   MRCLASS = {05.00},
  MRNUMBER = {121305},
MRREVIEWER = {D.\ E.\ Rutherford},
       DOI = {10.4153/CJM-1961-015-3},
       URL = {https://doi.org/10.4153/CJM-1961-015-3},
}

@article {schutzenberger,
    AUTHOR = {Sch\"utzenberger, M. P.},
     TITLE = {Promotion des morphismes d'ensembles ordonn\'es},
   JOURNAL = {Discrete Math.},
  FJOURNAL = {Discrete Mathematics},
    VOLUME = {2},
      YEAR = {1972},
     PAGES = {73--94},
      ISSN = {0012-365X,1872-681X},
   MRCLASS = {06A15},
  MRNUMBER = {299539},
MRREVIEWER = {E.\ Harzheim},
       DOI = {10.1016/0012-365X(72)90062-3},
       URL = {https://doi.org/10.1016/0012-365X(72)90062-3},
}

@misc{oeis,
	author = {{OEIS Foundation Inc.}},
	note = {Published electronically at \url{http://oeis.org}},
	title = {The {O}n-{L}ine {E}ncyclopedia of {I}nteger {S}equences},
	year = 2025
}

@incollection {stan_as,
    AUTHOR = {Stanley, Richard P.},
     TITLE = {Longest alternating subsequences of permutations},
      NOTE = {Special volume in honor of Melvin Hochster},
   JOURNAL = {Michigan Math. J.},
  FJOURNAL = {Michigan Mathematical Journal},
    VOLUME = {57},
      YEAR = {2008},
     PAGES = {675--687},
      ISSN = {0026-2285,1945-2365},
   MRCLASS = {05A05},
  MRNUMBER = {2492475},
MRREVIEWER = {Mikl\'os\ B\'ona},
       DOI = {10.1307/mmj/1220879431},
       URL = {https://doi.org/10.1307/mmj/1220879431},
}

@article {grass,
    AUTHOR = {Menon, Krishna and Singh, Anurag},
     TITLE = {Dyck paths, binary words, and {G}rassmannian permutations
              avoiding an increasing pattern},
   JOURNAL = {Ann. Comb.},
  FJOURNAL = {Annals of Combinatorics},
    VOLUME = {28},
      YEAR = {2024},
    NUMBER = {3},
     PAGES = {871--887},
      ISSN = {0218-0006,0219-3094},
   MRCLASS = {05A05 (05A15 05A19)},
  MRNUMBER = {4787909},
MRREVIEWER = {Nik\ Lygeros},
       DOI = {10.1007/s00026-023-00667-x},
       URL = {https://doi.org/10.1007/s00026-023-00667-x},
}

@book {bona_book,
    AUTHOR = {B\'ona, Mikl\'os},
     TITLE = {Combinatorics of permutations},
    SERIES = {Discrete Mathematics and its Applications (Boca Raton)},
      NOTE = {With a foreword by Richard Stanley},
 PUBLISHER = {Chapman \& Hall/CRC, Boca Raton, FL},
      YEAR = {2004},
     PAGES = {xiv+383},
      ISBN = {1-58488-434-7},
   MRCLASS = {05A05 (05-01)},
  MRNUMBER = {2078910},
MRREVIEWER = {Martin\ Klazar},
       DOI = {10.1201/9780203494370},
       URL = {https://doi.org/10.1201/9780203494370},
}

@article {isav,
    AUTHOR = {Deutsch, Emeric and Hildebrand, A. J. and Wilf, Herbert S.},
     TITLE = {Longest increasing subsequences in pattern-restricted
              permutations},
      NOTE = {Permutation patterns (Otago, 2003)},
   JOURNAL = {Electron. J. Combin.},
  FJOURNAL = {Electronic Journal of Combinatorics},
    VOLUME = {9},
      YEAR = {2002/03},
    NUMBER = {2},
     PAGES = {Research paper 12, 8},
      ISSN = {1077-8926},
   MRCLASS = {05A16 (05A05)},
  MRNUMBER = {2028291},
MRREVIEWER = {Kendra\ Killpatrick},
       DOI = {10.37236/1684},
       URL = {https://doi.org/10.37236/1684},
}

@article {asav,
    AUTHOR = {Firro, Ghassan and Mansour, Toufik and Wilson, Mark C.},
     TITLE = {Longest alternating subsequences in pattern-restricted
              permutations},
   JOURNAL = {Electron. J. Combin.},
  FJOURNAL = {Electronic Journal of Combinatorics},
    VOLUME = {14},
      YEAR = {2007},
    NUMBER = {1},
     PAGES = {Research Paper 34, 17},
      ISSN = {1077-8926},
   MRCLASS = {05A05 (05A15 05A16 60C05)},
  MRNUMBER = {2302541},
MRREVIEWER = {Eric\ S.\ Egge},
       DOI = {10.37236/952},
       URL = {https://doi.org/10.37236/952},
}

@article {isasav,
    AUTHOR = {Madras, Neal and Yıldırım, G\"okhan},
     TITLE = {Longest monotone subsequences and rare regions of
              pattern-avoiding permutations},
   JOURNAL = {Electron. J. Combin.},
  FJOURNAL = {Electronic Journal of Combinatorics},
    VOLUME = {24},
      YEAR = {2017},
    NUMBER = {4},
     PAGES = {Paper No. 4.13, 29},
      ISSN = {1077-8926},
   MRCLASS = {05A05 (05A16 60F99)},
  MRNUMBER = {3711046},
MRREVIEWER = {David\ Bevan},
       DOI = {10.37236/6402},
       URL = {https://doi.org/10.37236/6402},
}

@book {lisbook,
    AUTHOR = {Romik, Dan},
     TITLE = {The surprising mathematics of longest increasing subsequences},
    SERIES = {Institute of Mathematical Statistics Textbooks},
    VOLUME = {4},
 PUBLISHER = {Cambridge University Press, New York},
      YEAR = {2015},
     PAGES = {xi+353},
      ISBN = {978-1-107-42882-9; 978-1-107-07583-2},
   MRCLASS = {05-01 (05A05 05D40 60B20 60C05 60K35 82B41 82C41)},
  MRNUMBER = {3468738},
MRREVIEWER = {Sergi\ Elizalde},
}

@article {almostinc,
    AUTHOR = {Elmasry, Amr},
     TITLE = {The longest almost-increasing subsequence},
   JOURNAL = {Inform. Process. Lett.},
  FJOURNAL = {Information Processing Letters},
    VOLUME = {110},
      YEAR = {2010},
    NUMBER = {16},
     PAGES = {655--658},
      ISSN = {0020-0190,1872-6119},
   MRCLASS = {68W32},
  MRNUMBER = {2676799},
       DOI = {10.1016/j.ipl.2010.05.022},
       URL = {https://doi.org/10.1016/j.ipl.2010.05.022},
}

@article {common,
    AUTHOR = {Yang, I-Hsuan and Huang, Chien-Pin and Chao, Kun-Mao},
     TITLE = {A fast algorithm for computing a longest common increasing
              subsequence},
   JOURNAL = {Inform. Process. Lett.},
  FJOURNAL = {Information Processing Letters},
    VOLUME = {93},
      YEAR = {2005},
    NUMBER = {5},
     PAGES = {249--253},
      ISSN = {0020-0190,1872-6119},
   MRCLASS = {68W40 (68Q25 68W05 92D20)},
  MRNUMBER = {2112994},
       DOI = {10.1016/j.ipl.2004.10.014},
       URL = {https://doi.org/10.1016/j.ipl.2004.10.014},
}

@article {wave,
    AUTHOR = {Chen, Guan-Zhi and Yang, Chang-Biau and Chang, Yu-Cheng},
     TITLE = {The longest wave subsequence problem: generalizations of the
              longest increasing subsequence problem},
   JOURNAL = {Internat. J. Found. Comput. Sci.},
  FJOURNAL = {International Journal of Foundations of Computer Science},
    VOLUME = {36},
      YEAR = {2025},
    NUMBER = {2},
     PAGES = {203--218},
      ISSN = {0129-0541,1793-6373},
   MRCLASS = {68W32},
  MRNUMBER = {4871143},
MRREVIEWER = {Sergej\ V.\ Znamenskij},
       DOI = {10.1142/S012905412450014X},
       URL = {https://doi.org/10.1142/S012905412450014X},
}

@incollection {stanley2007increasing,
    AUTHOR = {Stanley, Richard P.},
     TITLE = {Increasing and decreasing subsequences and their variants},
 BOOKTITLE = {International {C}ongress of {M}athematicians. {V}ol. {I}},
     PAGES = {545--579},
 PUBLISHER = {Eur. Math. Soc., Z\"urich},
      YEAR = {2007},
      ISBN = {978-3-03719-022-7},
   MRCLASS = {05A05 (05C70 05E10 60C05)},
  MRNUMBER = {2334203},
MRREVIEWER = {Sylvie\ Corteel},
       DOI = {10.4171/022-1/21},
       URL = {https://doi.org/10.4171/022-1/21},
}

@article {greene,
    AUTHOR = {Greene, Curtis},
     TITLE = {An extension of {S}chensted's theorem},
   JOURNAL = {Advances in Math.},
  FJOURNAL = {Advances in Mathematics},
    VOLUME = {14},
      YEAR = {1974},
     PAGES = {254--265},
      ISSN = {0001-8708},
   MRCLASS = {05A17},
  MRNUMBER = {354395},
MRREVIEWER = {A.\ O.\ Morris},
       DOI = {10.1016/0001-8708(74)90031-0},
       URL = {https://doi.org/10.1016/0001-8708(74)90031-0},
}

\end{document}